\tikzstyle{decision} = [diamond, draw, fill=blue!20, 
\tikzstyle{block} = [rectangle, draw, fill=blue!20, 
\tikzstyle{line} = [draw, -latex']
\tikzstyle{cloud} = [draw, ellipse,fill=red!20, node distance=3cm,
 \newtheoremstyle{mystyle}{36pt}{}{}{}{\bfseries}{.}{ }{}
  \theoremstyle{plain}
 \newtheorem{definition}[]{Definition}
 \newtheorem{proposition}[]{Proposition}
 \newtheorem{example}{Example}
  \theoremstyle{remark}
 \newtheorem{remark}{Remark}
\tikzset{main node/.style={circle,fill=blue!20,draw,minimum size=1cm,inner sep=0pt},  }
\newcommand{\ts}{\mathsf{T}}
\newcommand{\dd}{\mathcal{\dagger}}
\newcommand{\vta}{\theta}
\newcommand{\D}{(\nabla_\omega\nabla_p)_{i,j}}
\providecommand{\bbs}[1]{\left(#1\right)}
\begin{document}
\title[Finite state Wasserstein common noises]{Langevin dynamics for the probability of {finite state} Markov processes}
\author[Li]{Wuchen Li}
\email{wuchen@mailbox.sc.edu}
\address{Department of Mathematics, University of South Carolina, 29208.}
\keywords{Optimal transport; Markov process; Wasserstein common noises.}
\thanks{W. Li's work is supported by AFOSR MURI FP 9550-18-1-502, AFOSR YIP award No. FA9550-23-1-0087, NSF DMS-2245097, and NSF RTG: 2038080.}
\begin{abstract}
We study gradient drift-diffusion processes on a probability simplex set with finite state Wasserstein metrics, namely {\em { finite state Wasserstein common noises}}. A fact is that the Kolmogorov transition equation of finite reversible Markov processes satisfies the gradient flow of entropy in finite state Wasserstein space. This paper proposes to perturb finite state Markov processes with Wasserstein common noises. In this way, we introduce a class of stochastic reversible Markov processes. We also define stochastic transition rate matrices, namely Wasserstein Q-matrices, for the proposed stochastic Markov processes. We then derive the functional Fokker-Planck equation in the probability simplex, whose stationary distribution is a Gibbs distribution of entropy functional in a simplex set. {Several examples of Wasserstein drift-diffusion processes on a two-point state space are presented.} 
\end{abstract}
\maketitle
\section{Introduction}
Drift diffusions in probability density spaces play essential roles in macroscopic fluctuation theory, non-equilibrium statistical physics (e.g., glass dynamics), and stochastic evolutionary games \cite{Dean,FY, KK}. They describe stochastic behaviors of particles/agents/players perturbed by Brownian motions (common noises) on population states. A famous example is the Dean--Kawasaki equation (super Brownian motion) \cite{Dean, KK}. Nowadays, the Dean--Kawasaki equation has been shown as a gradient drift-diffusion in the Wasserstein-2 space \cite{KLR, WD1, WD}. In literature, gradient flows in Wasserstein-2 space form a class of density evolutionary equations \cite{am2006, vil2008}. Typical examples are heat equations, which are Wasserstein gradient flows of negative Boltzmann-Shannon entropy. 
While the Dean--Kawasaki equation adds ``Wasserstein common noises'' into these density evolutionary dynamics, {which} introduce a class of stochastic heat equations. 

Classical studies of Wasserstein drift diffusion processes are defined on a continuous domain, e.g., a $d$-dimensional torus. Not much has been studied on {finite states}, such as finite weighted graphs or equivalently reversible Markov chains. It has been shown that the gradient flow in finite state Wasserstein-2 spaces \cite{chow2012, EM1, M} characterizes the Kolomogrov forward equation of the reversible {finite state} Markov process \cite{EM1, M}. These generalized Wasserstein gradient flow belongs to {generalized Onsager's principles} \cite{Hanggi84, ON}. Many physical, {{chemical}} \cite{MM}, and social models, including stochastic evolutionary game theory \cite{FY,hofbauer1988theory}, are often studied on finite state spaces. Natural questions arise: 

{\em What are drift diffusion processes in finite state Wasserstein spaces? In particular, what are canonical Wasserstein common noises perturbed reversible Markov processes?} 

This paper presents Wasserstein type drift diffusion processes in a finite state simplex set. Following \cite{LiG}, we study the canonical diffusion process in finite state Wasserstein space. We then formulate over-damped Langevin dynamics in finite state Wasserstein spaces. We also present an example of the gradient drift-diffusion process. When the potential function is the $\phi$-divergence, and the activation function is the $\phi$-divergence induced mean function, the proposed SDE adds {\em geometric diffusions} in the transition equations of finite reversible Markov processes. In particular, we derive a {\em Wasserstein $Q$-matrix} function for modeling common and individual noises towards finite reversible Markov processes. Finally, numerical and analytical examples on a two-point space are introduced to illustrate the proposed Langevin dynamics in the probability simplex.  

In literature, gradient drift-diffusion processes in Wasserstein-2 space on continuous domain have been studied in \cite{WD3, WD2, WD1, WD}. In particular, a general Wasserstein gradient drift-diffusion process has been widely studied in \cite{KLR}, which satisfies the Dean--Kawasaki equation \cite{Dean, KK}. In fact, the Wasserstein common noise differs from the Larsy-Lions common noise {\cite{cardaliaguet2019master}}, while the later one is widely used in mean-field control and mean-field games \cite{PL1}. Meanwhile, \cite{CG} demonstrates that the generator of Larsy--Lion's common noise is only a partial Wasserstein Laplacian operator. In contrast to their works, we formulate Wasserstein common noises on finite state spaces, which is constructed from the Laplacian-Beltrami operator on finite state Wasserstein-2 space. However, not much is known on a finite state space involving Markov  processes. 
The finite state Markov process is essential in physical modeling and computations. In literature, \cite{EM1, M} define a class of discrete Wasserstein-2 metrics on finite state spaces. These Wasserstein type metrics depend on the average functions based on entropy functionals and transition rate functions of Markov processes. {We note that the finite state Wasserstein-2 metric also defines a Riemannian distance in the probability simplex. This Riemannian distance is in general different from the one defined in linear programming with a ground cost on finite states. See detailed studies in \cite{M}. The major issue is that the discrete Wasserstein-2 metric with different average functions can be viewed as the “discrete approximation” to the Wasserstein-2 metric in the continuous domain.} Using this framework, Wasserstein common noises added into Markov processes are natural classes of stochastic processes, which have vast range of applications, such as modeling dynamics from chemical reaction diffusion in {generalized Onsager's principles} \cite{Hanggi84, ON, MM}, finite state evolutionary games \cite{FY}, mean field games \cite{GLL}, and data sciences sampling problems \cite{WWG}. Mathematically, Wasserstein common noises on finite states also bring a class of challenging degenerate stochastic processes, whenever the process stays on the boundary of probability simplex set. We leave theoretical studies and numerical simulations of Wasserstein drift diffusions on discrete states in the future work. 

{It is also worth mentioning that finite state Wasserstein diffusion processes are closely related to, but different from Wright-Fisher diffusion processes, which are widely studied in information geometry \cite{IG} and population genetics \cite{FPG}. The Wright-Fisher diffusion is built from the Laplacian-Beltrami operator in the Fisher-Rao geometry, while the Wasserstein diffusion is built from the one in Wasserstein type geometry. The detailed modeling perspective for finite state Wasserstein diffusions are left in the future work. }
 
This paper is organized as follows. In section \ref{section2}, we briefly review the finite state Wasserstein-2 metric with gradient, divergence, and Laplacian operators. We next write the gradient-drift diffusion process on a probability simplex set. We also formulate the Fokker-Planck equation in finite state Wasserstein space. 
In section \ref{section3}, we present the modeling motivation of this paper. First, we review that the generator ($Q$-matrix) of the reversible Markov process is the gradient descent of divergence functions. We then add a stochastic perturbation into the finite reversible Markov  process and develop a Wasserstein $Q$-matrix for reversible Markov processes. Finally, several examples and numerical simulations of Wasserstein drift diffusions on a two-point space are presented in section \ref{section4}.

\section{Wasserstein common noises in probability simplex}\label{section2}
In this section, we formulate the canonical diffusion process in a discrete probability simplex set embedded with Wasserstein-2 metrics. We then formulate the gradient drift diffusion in probability simplex, which is a over-damped Wasserstein Langevin dynamics. 

\subsection{Finite state Wasserstein-2 space}
We review the Wasserstein-2 type metric on finite state sample space \cite{chow2012, EM1, M}; see also geometric computations in \cite{LiG}. {We also recommend readers about some discussions on graph operators in \cite{GLL}.}   

Consider a weighted undirected finite graph $G=\left(I, E, \omega\right)$, which contains the vertex set $I=\{1,\cdots, n\}$, the edge set $E$, and the weights set $\omega$. Here $\omega=(\omega_{ij})_{i,j\in I}\in \mathbb{R}^{n\times n}$ is a symmetric matrix, such that
$$\omega_{ij}=\begin{cases}\omega_{ji}>0
& \textrm{if $(i,j)\in E$;}\\
0 & \textrm{otherwise}.
\end{cases}$$ 
The set of neighbors or adjacent vertices of $i$ is denoted by $N(i)=\{j\in I \colon (i,j)\in E\}$. Define a {``volume'' vector} on weighted graph as $\pi=(\pi_i)_{i=1}^n$, such that
\begin{equation}\label{vol}
\pi_i:=\frac{\sum_{j\in N(i)}\omega_{ij}}{\sum_{(i,j)\in E}\omega_{ij}}. 
\end{equation}

We review gradient, divergence, and Laplacian operators on the graph $G$. Given a function $\Phi \colon I \to \mathbb{R}$, denote $\Phi=(\Phi_i)_{i=1}^n\in \mathbb{R}^n$. Define a weighted gradient as a function $\nabla_\omega \Phi \colon E \to \mathbb{R}$, 
\begin{equation*}
(i,j)  \, \mapsto \,\,  (\nabla_\omega\Phi)_{ij} :=\sqrt{\omega_{ij}}\,(\Phi_j-\Phi_i). 
\end{equation*} 
We call $\nabla_\omega\Phi$ a potential vector field on the edge set $E$. A general vector field is a anti-symmetric function on the edge set $E$, such that 
$v=\big( v_{ij} \big)_{(i,j)\in E}$,  
\begin{equation*}
v_{ij}=-v_{ji}, \quad (i,j) \in E. 
\end{equation*}
The divergence of a vector field $v$ is defined as a function $\mathrm{div}_\omega(v) \colon E \to \mathbb{R}$,
\begin{equation*}
i \, \mapsto \, \mathrm{div}_\omega(v)_i := \sum_{j\in N(i)}\sqrt{\omega_{ij}}\, v_{ij}.
\end{equation*}
{Here the divergence and gradient operators satisfy the ``discrete integration by parts'':
\begin{equation*}
\sum_{i=1}^n\Phi_i \mathrm{div}_\omega(v)_i=-\frac{1}{2}\sum_{(i,j)\in E} v_{ij}(\nabla_\omega\Phi)_{i,j}.
\end{equation*}
}
For a function $\Phi$ on $V$, the weighted graph Laplacian $\Delta_\omega \Phi \colon V \to \mathbb{R}$ satisfies 
$$
\Delta_\omega \Phi:=\mathrm{div}_\omega\bbs{\nabla_\omega\Phi }, \quad \text{ i.e., \, } i\, \mapsto \,
\Delta_\omega \Phi_i
= \sum_{j\in N(i)}\omega_{ij}\,(\Phi_j-\Phi_i).
$$
We use the convention that $\Delta_\omega\in \mathbb{R}^{n\times n}$ denotes a negative semi-definite matrix. {In other words, for any vector $\Phi\in\mathbb{R}^n$, 
\begin{equation*}
\Phi^{\ts}(\Delta_\omega\Phi)=\sum_{i=1}^n\Phi_i(\Delta_\omega\Phi_i)=\sum_{i=1}^n\Phi_i\sum_{j=1}^n\omega_{ij}(\Phi_j-\Phi_i)=-\frac{1}{2}\sum_{(i,j)\in E}\omega_{ij}(\Phi_i-\Phi_j)^2\leq 0. 
\end{equation*}

}
We next introduce the Wasserstein-2 type metric on a finite state. Denote the {open} simplex set as  
\begin{equation*}
\mathcal{P}(I) = \Big\{p=(p_i)_{i=1}^n\in \mathbb{R}^n \colon \sum_{i=1}^n p_i=1,\quad  p_i>0\Big\},
\end{equation*}
where $p$ is a probability vector and $p_i$ represents the discrete probability function on a node $i\in I$. We only study the interior of simplex set. {The restriction to the open set is
necessary for the later on construction of a diffusion process.} Denote the tangent space of $\mathcal{P}(I)$ at $p\in\mathcal{P}(I)$ as
\begin{equation*}
T_p\mathcal{P}(I) = \Big\{(\sigma_i)_{i=1}^n\in \mathbb{R}^n\colon  \sum_{i=1}^n\sigma_i=0 \Big\}.
\end{equation*}
{Define the following average function, also named activation function, $\theta: \mathbb{R}^+\times \mathbb{R}^+ \to \mathbb{R}^+$, where $\mathbb{R}^+=\{x\in\mathbb{R}^1\colon x\geq 0\}$ represents the nonnegative real number, such that}
\begin{itemize} 
\item[(i)]
\begin{equation*}
\theta(x, y)=\theta(y, x);  
\end{equation*}
\item[(ii)]
\begin{equation*}
\theta(x, y)> 0, \mbox{ if } xy\neq  0; 
\end{equation*}
{and 
\begin{equation*}
\theta(x,y)=0, \mbox{ if } xy=0;
\end{equation*}}
\item[(iii)]
\begin{equation*}
\theta(x, y)\in C^{2}(\mathbb{R}^+, \mathbb{R}^+). 
\end{equation*}
\end{itemize}
There are many choices of average functions; see \cite{EM1, ON}. 
{\begin{example}[Arithmetic mean] 
{Suppose (i), (iii) hold. Consider}
$$\theta(x,y)=\frac{x+y}{2}.$$
\end{example}}
\begin{example}[Geometric mean]
{Suppose (i), (ii), (iii) hold. Consider}
$$\theta(x,y)=\sqrt{xy}.$$
\end{example}
\begin{example}[Harmonic mean]
{Suppose (i), (ii), (iii) hold. Consider}
$$\theta(x,y)=\frac{1}{\frac{1}{x}+\frac{1}{y}}.$$
\end{example}
\begin{example}[Logarithm mean]
{Suppose (i), (ii), (iii) hold. Consider}
$$\theta(x,y)=\frac{x-y}{\log x-\log y}.$$
\end{example}
\begin{example}[$\phi'$ mean]
{Suppose (i), (ii), (iii) hold. Consider}
$$\theta(x,y)=\frac{x-y}{\phi'(x)-\phi'(y)},$$
where $\phi\in C^{1}(\mathbb{R}; \mathbb{R})$ is a convex function with $\phi(1)=0$. If $\phi(x)=x\log x$, then $\phi'(x)=\log x$ and the $\phi'$ mean recovers the logarithm mean. 
\end{example}
{Under the notation of average function, we define the following weighted Laplacian matrix, which depends on the probability $p$ on a simplex set.}
\begin{definition}[Probability weighted Laplacian matrix]
Denote $L(p)=(L(p)_{ij})_{1\leq i,j\leq n}\in\mathbb{R}^{n\times n}$, such that 
\begin{equation*}
L(p)_{ij}:=\begin{cases}
-\omega_{ij}\theta_{ij}(p)&\textrm{if $j\neq i$};\\
\sum_{k\in N(i)}\omega_{ki}\theta_{ki}(p)&\textrm{if $j=i$,}
\end{cases}
\end{equation*}
where $\theta_{ij}$ is an average function defined as 
\begin{equation*}
\theta_{ij}(p):=\theta\left(\frac{p_i}{\pi_i}, \frac{p_j}{\pi_j}\right), \quad \textrm{for any $i$, $j\in I$.}
\end{equation*}
From now on, we call $L(p)$ the probability weighted Laplacian matrix. 
\end{definition}
{We also use the following notation to represent the probability weighted matrix $L(p)$. 
Denote a matrix function $\theta(p)=(\theta_{ij}(p))_{1\leq i, j\leq n}\in\mathbb{R}^{n\times n}$. Denote a vector field $\theta(p)\nabla_\omega\Phi\colon E\rightarrow\mathbb{R}$ as 
\begin{equation*}
(\theta(p) \nabla_\omega \Phi)_{ij}:=\theta_{ij}(p)(\nabla_\omega\Phi)_{ij}=\theta_{ij}(p)\sqrt{\omega_{ij}}(\Phi_j-\Phi_i). 
\end{equation*}
Clearly, $\theta(p)\nabla_\omega\Phi$ is a vector field on the edge set $E$, such that
\begin{equation*}
(\theta(p) \nabla_\omega \Phi)_{ij}=-(\theta(p) \nabla_\omega \Phi)_{ji}. 
\end{equation*}
We also write 
\begin{equation*}
L(p):=-\mathrm{div}_\omega (\theta(p)\nabla_\omega)=-\mathrm{div}_\omega (\theta \nabla_\omega). 
\end{equation*}
This means that for any vector $\Phi\in \mathbb{R}^n$, 
\begin{equation*}
(L(p)\Phi)_i=-\mathrm{div}_\omega (\theta(p)\nabla_\omega \Phi)_i=-\sum_{j\in N(i)}{\omega_{ij}}\theta_{ij}(p)(\Phi_j-\Phi_i). 
\end{equation*}
We remark that $L(p)$ is a symmetric nonegative definite matrix with the row sum zero condition. In other words,  for any testing vector $\Phi\in \mathbb{R}^n$, 
\begin{equation*}
\Phi^{\ts}L(p)\Phi=-\sum_{i=1}^n\Phi_i\mathrm{div}_\omega (\theta(p)\nabla_\omega \Phi)_i=\frac{1}{2}\sum_{(i,j)\in E}\omega_{ij}(\Phi_i-\Phi_j)^2\theta_{ij}(p)\geq 0, 
\end{equation*}
where we note that $\theta_{ij}(p)\geq 0$ from the definition of average function $\theta$. Denote $\mathbb{1}=(1,\cdots, 1)^{\ts}$, then 
\begin{equation*}
\mathbb{1}^{\ts}L(p)\Phi=-\sum_{i=1}^n\mathrm{div}_\omega (\theta(p)\nabla_\omega \Phi)_i=\frac{1}{2}\sum_{(i,j)\in E}(\nabla_\omega \mathbb{1})_{ij}(\nabla_\omega\Phi)_{ij}\theta_{ij}(p)=0. 
\end{equation*}

We now briefly study the property of matrix $L(p)$. When $\theta_{ij}(p)>0$ for all $(i,j)\in E$, matrix $L(p)$ is with exactly one zero eigenvalue. The corresponding unit eigenvector is the vector $\frac{1}{\sqrt{n}}(1,\cdots, 1)^{\ts}$. This is true since $\Phi^{\ts}L(p)\Phi=0$ has only one linear independent solution, 
\begin{equation*}
\Phi_i=\Phi_j, \quad \textrm{for any $(i,j)\in E$}. 
\end{equation*}
Since the graph is connected, we have $\Phi_1=\Phi_2=\cdots=\Phi_n$.} Thus, if $p$ stays in the interior of probability simplex, the diagonalization of $L(p)$ satisfies  
\begin{equation*}
L(p)=U(p)\begin{pmatrix}
0 & & &\\
& {\lambda_{1}(p)}& &\\
& & \ddots & \\
& & & {\lambda_{n-1}(p)}
\end{pmatrix}U(p)^{\ts},
\end{equation*}
where $0<\lambda_1(p)\leq\cdots\leq \lambda_{n-1}(p)$ are eigenvalues of $L(p)$ in the ascending order, 
and $U(p)=(u_0(p),u_1(p),\cdots, u_{n-1}(p))\in \mathbb{R}^{n\times n}$ is the orthogonal matrix of eigenvectors, with $u_0=\frac{1}{\sqrt{n}}(1,\cdots, 1)^{\ts}$. We also denote the pseudo-inverse of $L(p)$ as $L(p)^{\dagger}$, such that  
\begin{equation*}
L(p)^{\dagger}=U(p)\begin{pmatrix}
0 & & &\\
& \frac{1}{\lambda_{1}(p)}& &\\
& & \ddots & \\
& & & \frac{1}{\lambda_{n-1}(p)}
\end{pmatrix}U(p)^{\ts} .
\end{equation*}
{Using the probability weighted Laplacian matrix $L(p)$,} the finite state Wasserstein-2 metric is defined as follows. 
\begin{definition}[Finite state Wasserstein-2 metric]
The inner product $g^W:\mathcal{P}(I)\times T_p\mathcal{P}(I)\times T_p\mathcal{P}(I)\rightarrow\mathbb{R}$ is given as 
\begin{equation*}
\begin{split}
g^W(p)(\sigma_1,\sigma_2):=&{\sigma_1}^{\ts}L(p)^{\dagger}\sigma_2=\Phi_1^{\ts}L(p)\Phi_2\\
=&\frac{1}{2}\sum_{(i,j)\in E}(\nabla_\omega \Phi_1)_{ij}(\nabla_\omega \Phi_2)_{ij}\theta_{ij}(p),
\end{split}
\end{equation*}
where we define a vector $\Phi$ up to constant shrift satisfying 
 $$\sigma_k=L(p)\Phi_k=-\mathrm{div}_\omega (\theta\nabla_\omega \Phi_k)\in T_p\mathcal{P}(I), \quad k=1,2.$$ 
The inner product $g^W$ defines a Wasserstein-2 metric on the simplex set $\mathcal{P}(I)$. From now on, we name $(\mathcal{P}(I), g^W)$ the {\em probability manifold}. 
\end{definition}
\begin{remark}
{We remark that the inner product is zero on a constant vector of $\Phi$, i.e. $\Phi=c u_0$, where $c\in \mathbb{R}$. If one defines $\sigma_k=L(p)\Phi_k$, $k=1,2$, the scalar product $g^W$ is mapped to the tangent space of the open probability simplex and becomes a $(n-1)$ dimensional Riemannian metric. In particular, the following finite dimensional duality relation holds: $\sigma_k=L(p)\Phi_k$. We find a solution $\Phi_k:=L(p)^{\dd}\sigma_k$. In general, $\Phi_k$ can be written as $L(p)^{\dd}\sigma_k+cu_0$, up to a constant vector $u_0$ shrift. This is true since 
\begin{equation*}
\sigma\in \mathrm{Range}(L(p))=\mathrm{span}\{u_1,u_2,\cdots, u_{n-1}\}.
\end{equation*}
}
\end{remark}

We last present gradient, divergence, and Laplace-Beltrami operators in the probability manifold $(\mathcal{P}(I), g^W)$.
 The volume form in $(\mathcal{P}(I), g^W)$ satisfies 
\begin{equation*}
d\textrm{vol}_{W}:={\mathbb{\Pi}(p)}^{-\frac{1}{2}}dp,\quad\textrm{with}\quad \mathbb{\Pi}(p):=\Pi_{i=1}^{n-1}\lambda_i(p),
\end{equation*}
where $\lambda_i(p)$ are positive eigenvalues of the matrix function $L(p)$ and $dp$ is the  {volume form in the simplex set. Denote $\nabla_p$, $\nabla_p\cdot$ as gradient, divergence operators in $\mathbb{R}^n$.}
{We refer readers to check definitions of Riemannian operators on a simplex set in \cite{IG}.}  
\begin{proposition}
Denote $\mathbb{F}\in C^{\infty}(\mathcal{P}(I); \mathbb{R})$, and denote a vector function $\mathbb{H}=(\mathbb{H}_i)_{i=1}^n\in C^{\infty}(\mathbb{P}(I); \mathbb{R}^n)$.
\begin{itemize}
\item[(i)] The gradient operator $\textrm{grad}_W\colon C^{\infty}(\mathcal{P}(I); \mathbb{R})\rightarrow C^{\infty}(\mathcal{P}(I); \mathbb{R}^n)$ satisfies 
\begin{equation*}
\begin{split}
\textrm{grad}_W\mathbb{F}(p)=&L(p)\nabla_p\mathbb{F}(p)\\
=&\Big(-\mathrm{div}_\omega (\theta\nabla_\omega \nabla_p\mathbb{F}(p))_i\Big)_{i=1}^n\\
=&\Big(-\sum_{j\in N(i)}\sqrt{\omega_{ij}}\vta_{ij}(p)(\nabla_\omega \nabla_p)_{i,j}\mathbb{F}(p)\Big)_{i=1}^n,
\end{split}
\end{equation*}
where $$(\nabla_\omega \nabla_p)_{i,j}\mathbb{F}(p):=\sqrt{\omega_{ij}}(\frac{\partial}{\partial p_j}-\frac{\partial}{\partial p_i})\mathbb{F}(p).$$

\item[(ii)] The divergence operator $\textrm{div}_W\colon C^{\infty}(\mathcal{P}(I); \mathbb{R}^n)\rightarrow C^{\infty}(\mathcal{P}(I);\mathbb{R})$ satisfies
\begin{equation*}
\begin{split}
\textrm{div}_{{W}}\mathbb{H}(p)=&\mathbb{\Pi}(p)^{\frac{1}{2}}\nabla_p\cdot\Big(\mathbb{\Pi}(p)^{-\frac{1}{2}}\mathbb{H}(p)\Big). 
\end{split}
\end{equation*}
 
\item[(iii)] The Laplace-Beltrami operator $\Delta_W\colon C^{\infty}(\mathcal{P}(I);\mathbb{R})\rightarrow C^{\infty}(\mathcal{P}(I);\mathbb{R})$ satisfies 
\begin{equation*}
\begin{split}
\Delta_W \mathbb{F}(p)=&\textrm{div}_W(\textrm{grad}_W\mathbb{F}(p))\\
=&\mathbb{\Pi}(p)^{\frac{1}{2}}\nabla_p\cdot\Big(\mathbb{\Pi}(p)^{-\frac{1}{2}} L(p)\nabla_p\mathbb{F}(p)\Big)\\
=&-\frac{1}{4}\sum_{(i,j)\in E}(\nabla_\omega \nabla_p)_{i,j}\mathbb{F}(p)(\nabla_\omega\nabla_p)_{i,j}\log\mathbb{\Pi}(p) \vta_{ij}(p)\\
&+\frac{1}{2}\sum_{(i,j)\in E}(\nabla_\omega\nabla_p)_{i,j}(\nabla_\omega \nabla_p)_{i,j} \mathbb{F}(p)\vta_{ij}(p)\\
&+\frac{1}{2}\sum_{(i,j)\in E}(\nabla_\omega\nabla_p)_{i,j}\mathbb{F}(p)(\nabla_\omega \nabla_p)_{i,j} \vta_{ij}(p),
\end{split}
\end{equation*}
where 
\begin{equation*}
\begin{split}
(\nabla_\omega \nabla_p)_{i,j}(\nabla_\omega \nabla_p)_{i,j}\mathbb{F}(p):=&(\sqrt{\omega_{ij}}(\frac{\partial}{\partial p_j}-\frac{\partial}{\partial p_i}))^2\mathbb{F}(p)\\
=&\omega_{ij}(\frac{\partial^2}{\partial p_i^2}-2\frac{\partial^2}{\partial p_i\partial p_j}+\frac{\partial^2}{\partial p_j^2})\mathbb{F}(p),
\end{split}
\end{equation*}
and 
\begin{equation*}
(\nabla_\omega\nabla_p)_{i,j}\theta_{ij}(p):=\sqrt{\omega_{ij}}(\frac{\partial}{\partial p_j}-\frac{\partial}{\partial p_i})\theta_{ij}(p).
\end{equation*}
\end{itemize}
\end{proposition}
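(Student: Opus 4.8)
The plan is to compute each operator directly from its coordinate-free definition, reducing everything to linear algebra with the matrix $L(p)$ and its pseudo-inverse $L(p)^{\dagger}$, and then to rewrite the resulting Euclidean expressions through the edge operators $(\nabla_\omega\nabla_p)_{i,j}$. For the gradient, I would first record the facts about $L(p)$ already proved above: $L(p)$ is symmetric and positive semidefinite with $\ker L(p)=\mathrm{span}\{\mathbb{1}\}$ and $\mathrm{Range}(L(p))=\mathbb{1}^{\perp}=T_p\mathcal{P}(I)$, so that $L(p)L(p)^{\dagger}$ is the orthogonal projection onto $T_p\mathcal{P}(I)$. The Riemannian gradient is characterized by $g^W(p)(\mathrm{grad}_W\mathbb{F}(p),\sigma)=\nabla_p\mathbb{F}(p)\cdot\sigma$ for all $\sigma\in T_p\mathcal{P}(I)$; substituting $g^W(p)(\eta,\sigma)=\eta^{\ts}L(p)^{\dagger}\sigma$ and trying $\eta=L(p)\nabla_p\mathbb{F}(p)$ gives $\eta^{\ts}L(p)^{\dagger}\sigma=\nabla_p\mathbb{F}(p)^{\ts}L(p)L(p)^{\dagger}\sigma=\nabla_p\mathbb{F}(p)^{\ts}\sigma$, since $\sigma$ is fixed by the projection, and $L(p)\nabla_p\mathbb{F}(p)\in T_p\mathcal{P}(I)$ because the rows of $L(p)$ sum to zero. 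One must also check well-definedness: the ambient gradient $\nabla_p\mathbb{F}$ depends on the chosen extension of $\mathbb{F}$ off the simplex, but two extensions have gradients differing by a multiple of $\mathbb{1}$, which is annihilated by $L(p)$. The two explicit formulas then follow by inserting $L(p)=-\mathrm{div}_\omega(\theta\nabla_\omega)$ and unfolding the definitions of $\mathrm{div}_\omega$ and $(\nabla_\omega\nabla_p)_{i,j}$.

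For the divergence, I would first justify $d\mathrm{vol}_W=\mathbb{\Pi}(p)^{-1/2}dp$: in an orthonormal (for the ambient Euclidean structure) basis of $T_p\mathcal{P}(I)$ the Gram matrix of $g^W$ represents the restriction of $L(p)^{\dagger}$ to $\mathbb{1}^{\perp}$, whose determinant is $\prod_{i=1}^{n-1}\lambda_i(p)^{-1}=\mathbb{\Pi}(p)^{-1}$, so $\sqrt{\det g^W}=\mathbb{\Pi}(p)^{-1/2}$ and the Riemannian volume equals $\mathbb{\Pi}(p)^{-1/2}$ times the Lebesgue measure $dp$ on the affine simplex. The general identity $\mathrm{div}_\mu V=\frac{1}{\rho}\nabla\cdot(\rho V)$ for a volume form $\mu=\rho\,dx$ then yields $\mathrm{div}_W\mathbb{H}=\mathbb{\Pi}(p)^{1/2}\nabla_p\cdot(\mathbb{\Pi}(p)^{-1/2}\mathbb{H})$, once one checks that for a field $\mathbb{H}$ tangent to the simplex (extended so that $\sum_i\mathbb{H}_i$ stays constant) the intrinsic flat divergence agrees with the ambient Euclidean divergence $\nabla_p\cdot\mathbb{H}$; the only possible discrepancy is a derivative of $\sum_i\mathbb{H}_i$ along the normal direction $\mathbb{1}$, which vanishes.

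For the Laplace--Beltrami operator, composing the first two parts gives $\Delta_W\mathbb{F}=\mathbb{\Pi}^{1/2}\nabla_p\cdot(\mathbb{\Pi}^{-1/2}L(p)\nabla_p\mathbb{F})=\nabla_p\cdot(L(p)\nabla_p\mathbb{F})-\tfrac12(\nabla_p\log\mathbb{\Pi}(p))\cdot(L(p)\nabla_p\mathbb{F})$, using the product rule and $\nabla_p\mathbb{\Pi}^{-1/2}=-\tfrac12\mathbb{\Pi}^{-1/2}\nabla_p\log\mathbb{\Pi}$. It then remains to expand the two terms: writing $(L(p)\nabla_p\mathbb{F})_i=-\sum_{j\in N(i)}\sqrt{\omega_{ij}}\,\theta_{ij}(p)(\nabla_\omega\nabla_p)_{i,j}\mathbb{F}(p)$, applying $\partial_{p_i}$, summing over $i$, and symmetrizing each sum over ordered pairs $(i,j)$ with $i\sim j$ (using $\omega_{ij}=\omega_{ji}$, $\theta_{ij}=\theta_{ji}$, and $\partial_{p_j}-\partial_{p_i}=-(\partial_{p_i}-\partial_{p_j})$) turns $\nabla_p\cdot(L(p)\nabla_p\mathbb{F})$ into the Hessian term $\tfrac12\sum_{(i,j)\in E}(\nabla_\omega\nabla_p)_{i,j}(\nabla_\omega\nabla_p)_{i,j}\mathbb{F}(p)\,\theta_{ij}(p)$ plus the transport term $\tfrac12\sum_{(i,j)\in E}(\nabla_\omega\nabla_p)_{i,j}\mathbb{F}(p)\,(\nabla_\omega\nabla_p)_{i,j}\theta_{ij}(p)$, while the same symmetrization applied to $-\tfrac12(\nabla_p\log\mathbb{\Pi}(p))\cdot(L(p)\nabla_p\mathbb{F})$ produces $-\tfrac14\sum_{(i,j)\in E}(\nabla_\omega\nabla_p)_{i,j}\mathbb{F}(p)\,(\nabla_\omega\nabla_p)_{i,j}\log\mathbb{\Pi}(p)\,\theta_{ij}(p)$. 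Summing the three pieces gives the stated identity; throughout, each ambient gradient enters only through the extension-insensitive combinations $\partial_{p_j}-\partial_{p_i}$ or through contractions with tangent vectors, so all expressions are well defined on $\mathcal{P}(I)$.

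I expect the main obstacle to be not the Laplacian expansion, which is a mechanical if lengthy antisymmetrization, but the second step: correctly pinning down the volume form and identifying the ambient Euclidean divergence $\nabla_p\cdot$ with the intrinsic divergence on the $(n-1)$-dimensional affine simplex, together with the bookkeeping that ensures every ``$\nabla_p$'' in the statement is used only through quantities that do not depend on how functions are extended off $\mathcal{P}(I)$. Once that is settled, the remaining computations are routine.
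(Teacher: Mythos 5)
Your proposal is correct, and it supplies in detail exactly the derivation the paper elides: the paper's ``proof'' of this proposition is a one-line citation to \cite[Propositions 1 and 8]{LiG}, while its appendix carries out the same edge-symmetrization computation for the closely related operator $\mathsf{L}^*_W$ that you use in part (iii). Your handling of the two genuinely delicate points --- that $L(p)L(p)^{\dagger}$ is the orthogonal projection onto $T_p\mathcal{P}(I)=\mathbb{1}^{\perp}$ so the Riemannian gradient is extension-independent, and that the ambient divergence $\nabla_p\cdot$ agrees with the intrinsic one because $\sum_i\mathbb{H}_i=0$ identically for fields in the range of $L(p)$ --- is sound and matches the conventions the paper adopts.
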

{
\begin{proof}
The derivation of gradient, divergence, and Laplace-Beltrami operators follows from the proof in \cite[Proposition 1 and 8]{LiG}. We omit them here for the simplicity of presentation.  

\end{proof}}
\subsection{Finite state cannocial Wasserstein common noises}
We are ready to introduce a canonical diffusion process on a manifold  $(\mathcal{P}(I),g^W)$. 
\begin{definition}[Wasserstein common noises on graphs]
Consider an Ito stochastic differential equation  
\begin{equation}\label{GSDE}
dp_t=\mathrm{div}_\omega(\theta(p_t) \nabla_\omega \nabla_p\log\frac{\mathbb{\Pi}(p_t)^{\frac{1}{2}}}{\theta(p_t)})dt+\sqrt{2}\mathrm{div}_\omega(\sqrt{\theta(p_t)}dB^E_t),
\end{equation}
where {$p_0=p(0)\in\mathcal{P}(I)$ is an initial value probability function}, {$p_t=p(t)\in\mathbb{R}^n$} is the solution of SDE \eqref{GSDE}, $B_t^E:=(B^E_{ij}(t))_{1\leq i,j\leq n}$ with $B_{ij}^E(t)=B_{ij}^E=\frac{1}{\sqrt{2}}(B_{ij}-B_{ji})$, and $B_{ij}$, $1\leq i,j\leq n$, are standard independent Brownian motions in $\mathbb{R}^{n\times n}$ with zero means and unity rate variances. In details, for any $i\in I$, equation \eqref{GSDE} satisfies 
\begin{equation*}
\begin{split}
dp_i(t)=&~~\sum_{j\in N(i)}\sqrt{\omega_{ij}}(\nabla_\omega \nabla_p)_{i,j}\log\frac{\mathbb{\Pi}(p(t))^\frac{1}{2}}{\theta_{ij}(p(t))}\theta_{ij}(p(t)) dt\\
&+\sum_{j\in N(i)}\sqrt{\omega_{ij}\theta_{ij}(p(t))} (dB_{ij}(t)-dB_{ji}(t)),
\end{split}
\end{equation*}
where
\begin{equation*}
(\nabla_\omega \nabla_p)_{i,j} \log\frac{\mathbb{\Pi}(p)^{\frac{1}{2}}}{\theta(p)}:=\sqrt{\omega_{ij}}(\frac{\partial}{\partial p_j}-\frac{\partial}{\partial p_i})(\log\frac{\mathbb{\Pi}(p)^{\frac{1}{2}}}{\theta_{ij}(p)}).
\end{equation*}
We call a solution of \eqref{GSDE} the $\sqrt{2}$-Wasserstein common noise on finite states.
\end{definition}
{We note that the above SDE is defined on $\mathbb{R}^n$, whose solution stays in the simplex set when the initial value $p(0)$ stays in the simplex set. This is true because the discrete divergence operator has the following property. For any discrete vector fields $v_{ij}=-v_{ij}$, we have
\begin{equation*}
\sum_{i=1}^n\mathrm{div}_\omega(v)_i= \sum_{i=1}^n\sum_{j\in N(i)}\sqrt{\omega_{ij}}v_{ij}=\frac{1}{2}\sum_{(i,j)\in E}\sqrt{\omega_{ij}}(v_{ij}+v_{ji})=0.
\end{equation*}
We check that $\theta(p_t) \nabla_\omega \nabla_p\log\frac{\mathbb{\Pi}(p_t)^{\frac{1}{2}}}{\theta(p_t)}$, $\sqrt{\theta(p_t)}dB^E_t$ are discrete vector fields. I.e., 
\begin{equation*}
(\theta(p_t) \nabla_\omega \nabla_p\log\frac{\mathbb{\Pi}(p_t)^{\frac{1}{2}}}{\theta(p_t)})_{ij}=-(\theta(p_t) \nabla_\omega \nabla_p\log\frac{\mathbb{\Pi}(p_t)^{\frac{1}{2}}}{\theta(p_t)})_{ji},
\end{equation*}
and 
\begin{equation*}
(\sqrt{\theta(p_t)}dB^E_t)_{ij}=-(\sqrt{\theta(p_t)}dB^E_t)_{ji}.
\end{equation*}
Thus  
\begin{equation*}
\sum_{i=1}^n dp_{it}=\sum_{i=1}^n \Big\{\mathrm{div}_\omega(\theta(p_t) \nabla_\omega \nabla_p\log\frac{\mathbb{\Pi}(p_t)^{\frac{1}{2}}}{\theta(p_t)})_idt+\sqrt{2}\mathrm{div}_\omega(\sqrt{\theta(p_t)}dB^E_t)_i\Big\}=0. 
\end{equation*}
This explains that $\sum_{i=1}^n p_i(t)=\sum_{i=1}^np_i(0)$. 
 }

We next present both Kolmogorov forward and backward operators for SDE \eqref{GSDE}. 
\begin{proposition}[Kolmogorov operators in probability manifold]\label{GKO}
Denote the probability density function and the test function on the simple set $\mathcal{P}(I)$ as
\begin{equation*}
\mathbb{P}(p)\in C^{\infty}(\mathcal{P}(I); \mathbb{R}),\qquad \mathbb{\Phi}(p)\in C^{\infty}(\mathcal{P}(I); \mathbb{R}).
\end{equation*}
We denote $C^{\infty}(\mathcal{P}(I); \mathbb{R})$ as the set of smooth functions, whose domain is the simplex set. $\mathbb{P}(p)$ is a probability density function supported at the simplex set $\mathcal{P}(I)$, and $\mathbb{\Phi}(p)$ is a function, whose domain is the simplex set $\mathcal{P}(I)$.

Then the Kolmogorov forward operator of SDE \eqref{GSDE} satisfies  
\begin{equation*}
\begin{split}
\mathsf{L}^*_W\mathbb{P}(p)=&\frac{1}{2}\nabla_p\cdot \Big(\mathbb{P}(p) L(p) \nabla_p\log\mathbb{\Pi}(p)\Big)+\nabla_p\cdot \Big(L(p) \nabla_p\mathbb{P}(p)\Big)\\
=&\frac{1}{2} \Big(\nabla_p\mathbb{P}(p), L(p) \nabla_p\log\mathbb{\Pi}(p)\Big)+\frac{1}{2}\mathbb{P}(p)\nabla_p\cdot\Big(L(p)\nabla_p\log\mathbb{\Pi}(p)\Big)+\nabla_p\cdot \Big(L(p) \nabla_p\mathbb{P}(p)\Big).
\end{split}
\end{equation*}
And the Kolmogorov backward operator of SDE \eqref{GSDE} satisfies 
\begin{equation*}
\begin{split}
\mathsf{L}_W\mathbb{\Phi}(p)=&-\frac{1}{2}(\nabla_p\mathbb{\Phi}(p), L(p)\nabla_p\log\mathbb{\Pi}(p))+\nabla_p\cdot(L(p)\nabla_p\mathbb{\Phi}(p)).
\end{split}
\end{equation*}
In details, 
\begin{equation*}
\begin{split}
\mathsf{L}^*_W\mathbb{P}(p)=&\quad\frac{1}{4}\sum_{(i,j)\in E}  (\nabla_\omega \nabla_p)_{i,j} \mathbb{P}(p) (\nabla_\omega \nabla_p)_{i,j}\log\mathbb{\Pi}(p)\vta_{ij}(p)\\
&+\frac{1}{4}\mathbb{P}(p)\sum_{(i,j)\in E}\D\log\mathbb{\Pi}(p)\D\theta_{ij}(p)\\
&+\frac{1}{4}\mathbb{P}(p)\sum_{(i, j)\in E}(\nabla_\omega\nabla_p)_{i,j}(\nabla_\omega \nabla_p)_{i,j} \log\mathbb{\Pi}(p)\vta_{ij}(p)\\
&+\frac{1}{2}\sum_{(i,j)\in E}(\nabla_\omega\nabla_p)_{i,j}(\nabla_\omega \nabla_p)_{i,j} \mathbb{P}(p)\vta_{ij}(p)\\
&+\frac{1}{2}\sum_{(i,j)\in E} (\nabla_\omega\nabla_p)_{i,j}\mathbb{P}(p)(\nabla_\omega\nabla_p)_{i,j}\theta_{ij}(p),
\end{split}
\end{equation*}
and 
\begin{equation*}
\begin{split}
\mathsf{L}_W\mathbb{\Phi}(p)=&-\frac{1}{4}\sum_{(i,j)\in E}   (\nabla_\omega \nabla_p)_{i,j}\mathbb{\Phi}(p)(\nabla_\omega \nabla_p)_{i,j} \log\mathbb{\Pi}(p)\vta_{ij}(p)\\
&+\frac{1}{2}\sum_{(i,j)\in E}(\nabla_\omega\nabla_p)_{i,j}(\nabla_\omega \nabla_p)_{i,j} \mathbb{\Phi}(p)\vta_{ij}(p)\\
&+\frac{1}{2}\sum_{(i,j)\in E}(\nabla_\omega\nabla_p)_{i,j}\mathbb{\Phi}(p)(\nabla_\omega \nabla_p)_{i,j} \vta_{ij}(p).
\end{split}
\end{equation*}
\end{proposition}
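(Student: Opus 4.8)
The plan is to read off both Kolmogorov operators of \eqref{GSDE} from It\^o's formula, after putting the diffusion matrix and the drift in convenient form; the componentwise ``in details'' formulas are then the same discrete-calculus bookkeeping already carried out for $\Delta_W$ in the preceding proposition. Everything is local and takes place on the open simplex, where $\theta_{ij}(p)>0$ on each edge and $\mathbb{\Pi}(p)>0$, so all coefficients of \eqref{GSDE} are smooth; moreover drift and diffusion are tangent to $\{\sum_i p_i=1\}$ since $L(p)\mathbb{1}=\mathbb{1}^{\ts}L(p)=0$, so the operators are understood on that affine subspace. For the principal symbol, start from the componentwise form $dp_i(t)=b_i(p_t)\,dt+\sum_{j\in N(i)}\sqrt{\omega_{ij}\theta_{ij}(p_t)}\,(dB_{ij}(t)-dB_{ji}(t))$ and use independence of the $B_{ij}$: the bracket $d\langle B_{ij}-B_{ji},\,B_{kl}-B_{lk}\rangle$ equals $2\,dt$, $-2\,dt$, or $0$ according as $(k,l)=(i,j)$, $(k,l)=(j,i)$, or neither. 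Summing, the diffusion matrix $a(p)$ defined by $a(p)_{ik}\,dt=d\langle p_i,p_k\rangle$ has diagonal $2\sum_{j\in N(i)}\omega_{ij}\theta_{ij}(p)=2L(p)_{ii}$ and off-diagonal $-2\omega_{ik}\theta_{ik}(p)=2L(p)_{ik}$, i.e. $a(p)=2L(p)$. It\^o's formula then yields $\mathsf{L}_W\mathbb{\Phi}(p)=\langle b(p),\nabla_p\mathbb{\Phi}(p)\rangle+\sum_{i,k}L(p)_{ik}\partial_{p_i}\partial_{p_k}\mathbb{\Phi}(p)$.

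\textbf{The drift in divergence form.} Using $(L(p)\nabla_p F)_i=-\sum_{j\in N(i)}\omega_{ij}\theta_{ij}(p)(\partial_{p_j}-\partial_{p_i})F$ and splitting $\log\frac{\mathbb{\Pi}^{1/2}}{\theta_{ij}}=\tfrac12\log\mathbb{\Pi}-\log\theta_{ij}$, the cancellation $\theta_{ij}(p)\cdot\theta_{ij}(p)^{-1}(\partial_{p_j}-\partial_{p_i})\theta_{ij}(p)=(\partial_{p_j}-\partial_{p_i})\theta_{ij}(p)$ gives
\[
b_i(p)=-\tfrac12\big(L(p)\nabla_p\log\mathbb{\Pi}(p)\big)_i-\sum_{j\in N(i)}\omega_{ij}(\partial_{p_j}-\partial_{p_i})\theta_{ij}(p).
\]
Differentiating the entries $L(p)_{ki}=-\omega_{ki}\theta_{ki}(p)$ $(k\ne i)$ and $L(p)_{ii}=\sum_{m\in N(i)}\omega_{mi}\theta_{mi}(p)$, and using $\omega_{ij}=\omega_{ji}$, $\theta_{ij}(p)=\theta_{ji}(p)$, $L(p)^{\ts}=L(p)$, one identifies the second term above with $\sum_{k}\partial_{p_k}L(p)_{ik}$. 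Substituting into $\mathsf{L}_W$ and folding $\sum_{i,k}(\partial_{p_i}L(p)_{ik})\partial_{p_k}\mathbb{\Phi}+\sum_{i,k}L(p)_{ik}\partial_{p_i}\partial_{p_k}\mathbb{\Phi}=\nabla_p\cdot(L(p)\nabla_p\mathbb{\Phi})$ yields the asserted $\mathsf{L}_W\mathbb{\Phi}=-\tfrac12(\nabla_p\mathbb{\Phi},L(p)\nabla_p\log\mathbb{\Pi})+\nabla_p\cdot(L(p)\nabla_p\mathbb{\Phi})$. (By symmetry of $L(p)$ this is exactly $\Delta_W\mathbb{\Phi}$, a consistency check reflecting the $\sqrt2$ normalization in \eqref{GSDE}.)

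\textbf{Forward operator and componentwise formulas.} The Kolmogorov forward operator is the formal $L^2(dp)$ adjoint of $\mathsf{L}_W$ on the hyperplane $\{\sum_i p_i=1\}$; integrating by parts twice and using $L(p)^{\ts}=L(p)$ gives $\mathsf{L}^*_W\mathbb{P}=\tfrac12\nabla_p\cdot(\mathbb{P}L(p)\nabla_p\log\mathbb{\Pi})+\nabla_p\cdot(L(p)\nabla_p\mathbb{P})$, and the Leibniz rule applied to the first divergence produces the second displayed expression for $\mathsf{L}^*_W$. Finally, the ``in details'' identities follow by inserting $(L(p)\nabla_p\mathbb{F})_i=-\sum_{j\in N(i)}\sqrt{\omega_{ij}}\,\theta_{ij}(p)(\nabla_\omega\nabla_p)_{i,j}\mathbb{F}(p)$ and expanding each outer $\nabla_p\cdot$ by the Leibniz rule, i.e. the identical manipulation performed for $\Delta_W$ in the previous proposition (equivalently \cite[Proposition 1 and 8]{LiG}); this step is routine.

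\textbf{Expected main obstacle.} The delicate points are (i) tracking the antisymmetrization of the edge noises $B^E_{ij}=\tfrac{1}{\sqrt2}(B_{ij}-B_{ji})$ together with the $\sqrt2$ prefactor, so as to pin down $a(p)=2L(p)$ exactly rather than $L(p)$ or $4L(p)$; and (ii) recognizing that the $-\log\theta_{ij}$ piece of the drift potential in \eqref{GSDE} is precisely the It\^o correction $\sum_k\partial_{p_k}L(p)_{ik}$ that converts the non-divergence second-order part into divergence form $\nabla_p\cdot(L(p)\nabla_p\,\cdot\,)$. Once these are settled, everything reduces to standard It\^o calculus, integration by parts, and the discrete-calculus bookkeeping already established for the Laplace--Beltrami operator.
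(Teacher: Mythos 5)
Your proposal is correct, but it travels in the opposite direction from the paper. The paper's appendix starts from the Laplace--Beltrami operator $\Delta_W\mathbb{p}=\mathbb{\Pi}(p)^{1/2}\nabla_p\cdot\bigl(\mathbb{\Pi}(p)^{-1/2}L(p)\nabla_p\mathbb{p}\bigr)$ (taken as known from the preceding proposition and \cite[Propositions 1 and 8]{LiG}), performs the change of reference measure $\mathbb{P}=\mathbb{p}\,\mathbb{\Pi}^{-1/2}$ from $d\mathrm{vol}_W$ to Lebesgue measure to obtain $\mathsf{L}^*_W$ in divergence form, and only then recovers $\mathsf{L}_W$ as the $L^2(dp)$ adjoint; the explicit edge-sum expansion of $\nabla_p\cdot(L(p)\nabla_p\mathbb{P})$ via $\sum_j\partial_{p_j}L(p)_{ij}=\sum_{j\in N(i)}\omega_{ij}(\partial_{p_i}-\partial_{p_j})\theta_{ij}(p)$ is the part the paper writes out in full. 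You instead derive $\mathsf{L}_W$ first, directly from It\^o's formula: your computation of the quadratic covariation of the antisymmetrized edge noises giving $a(p)=2L(p)$ is correct (the bracket values $\pm 2\,dt$ check out, and the diagonal and off-diagonal entries match $2L(p)_{ii}$ and $2L(p)_{ik}$), and your identification of the $-\log\theta_{ij}$ piece of the drift with the It\^o correction $\sum_k\partial_{p_k}L(p)_{ik}$ that closes the second-order part into $\nabla_p\cdot(L(p)\nabla_p\,\cdot\,)$ is exactly the right mechanism. What your route buys is a verification that the paper omits entirely (it dismisses the SDE-to-generator step as ``standard''), namely that the generator of \eqref{GSDE} really is $\Delta_W$; your parenthetical consistency check makes this explicit. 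What the paper's route buys is that the stationary measure $\mathbb{\Pi}^{-1/2}dp$ and the role of the volume form are visible from the outset, and the componentwise formulas fall out of a single expansion rather than being deferred. Your deferral of the ``in details'' identities to the same discrete bookkeeping is acceptable, since that expansion is identical to the one the paper carries out for $\nabla_p\cdot(L(p)\nabla_p\mathbb{P})$ and then invokes ``similarly'' for the $\log\mathbb{\Pi}$ term.
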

The derivations of $L_W^*$ and $L_W$ are provided in appendix. 
\subsection{Langevin dynamics in finite state Wasserstein space}
We next derive the overdamped Langevin dynamics in finite state Wasserstein space. It describes gradient drift diffusion processes in the probability simplex set.
{Here the gradient drift diffusion process refers to the time reversible stochastic process, which means that it satisfies the detailed balance condition. See the definition in \cite[Section 4.6]{GP}.}

\begin{proposition}[Gradient drift diffusion processes in probability simplex]\label{prop2}
Given $\mathbb{V}\in C^{\infty}(P(I); \mathbb{R})$, consider the gradient drift diffusion process 
\begin{equation}\label{GGSDE}
dp_t=\mathrm{div}_\omega(\theta(p_t) \nabla_\omega \nabla_p[\mathbb{V}(p_t)+\beta\log\frac{\mathbb{\Pi}(p_t)^{\frac{1}{2}}}{\theta(p_t)}])dt+\sqrt{2\beta}\mathrm{div}_\omega(\sqrt{\theta(p_t)}dB^E_t),
\end{equation}
where $\beta>0$ is a scalar. In details, for any $i\in I$, equation \eqref{GGSDE} satisfies 
\begin{equation*}
\begin{split}
dp_i(t)=&\qquad\sum_{j\in N(i)}\sqrt{\omega_{ij}}(\nabla_\omega \nabla_p)_{i,j}\Big(\mathbb{V}(p(t))+\beta\log\frac{\Pi(p(t))^{\frac{1}{2}}}{\theta_{ij}(p(t))}\Big)\theta_{ij}(p(t))dt\\
&+\sqrt{\beta}\sum_{j\in N(i)}\sqrt{\omega_{ij}\theta_{ij}(p(t))} (dB_{ij}(t)-dB_{ji}(t)).
\end{split}
\end{equation*}
The Fokker-Planck equation of SDE \eqref{GGSDE} satisfies 
\begin{equation}\label{FPE}
\begin{split}
\frac{\partial}{\partial t}\mathbb{P}(t,p)=&\nabla_p\cdot(\mathbb{P}(t,p)L(p)\nabla_p\mathbb{V}(p))+\beta\mathsf{L}^*_W\mathbb{P}(t,p), 
\end{split}
\end{equation}
where the solution $\mathbb{P}(t, p)$ represents the probability density function of SDE \eqref{GGSDE}. 
Assume that $Z:=\int_{\mathcal{P}(I)}e^{-\frac{1}{\beta}\mathbb{V}(p)}\mathbb{\Pi}(p)^{-\frac{1}{2}}dp<+\infty$. Then the stationary solution of equation \eqref{FPE} satisfies 
\begin{equation*}
\mathbb{P}^*(p)=\frac{1}{Z}e^{-\frac{1}{\beta}\mathbb{V}(p)}\mathbb{\Pi}(p)^{-\frac{1}{2}}.
\end{equation*}

\end{proposition}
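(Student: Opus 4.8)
The plan is to prove the three assertions of Proposition~\ref{prop2} in order: the component form of \eqref{GGSDE}, the Fokker--Planck equation \eqref{FPE}, and the Gibbs stationary density. The component form is obtained by simply unfolding the operators. By the definitions of $\mathrm{div}_\omega$ and of the edge vector field $\theta(p)\nabla_\omega(\cdot)$ recalled in Section~\ref{section2}, the $i$-th drift coordinate of \eqref{GGSDE} equals $\sum_{j\in N(i)}\sqrt{\omega_{ij}}\,\theta_{ij}(p_t)\,(\nabla_\omega\nabla_p)_{i,j}\big(\mathbb{V}(p_t)+\beta\log\tfrac{\mathbb{\Pi}(p_t)^{1/2}}{\theta_{ij}(p_t)}\big)$, while for the noise term one substitutes $B^E_{ij}=\tfrac1{\sqrt2}(B_{ij}-B_{ji})$ to obtain $\sqrt{2\beta}\,\mathrm{div}_\omega(\sqrt{\theta(p_t)}\,dB^E_t)_i=\sqrt{\beta}\sum_{j\in N(i)}\sqrt{\omega_{ij}\theta_{ij}(p_t)}\,(dB_{ij}(t)-dB_{ji}(t))$. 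The antisymmetry of both vector fields, already exploited in Section~\ref{section2}, keeps $\sum_ip_i(t)$ constant in time, and the open simplex is invariant because $\theta_{ij}$ vanishes on its boundary.

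For the Fokker--Planck equation I would first read off the Itô diffusion matrix of \eqref{GGSDE} from the quadratic covariations. Using that the $B_{ij}$ are independent, that the graph has no self-loops, and the symmetries $\omega_{ij}=\omega_{ji}$, $\theta_{ij}(p)=\theta_{ji}(p)$, one finds $d\langle p_i,p_k\rangle_t=2\beta\,L(p_t)_{ik}\,dt$; that is, the diffusion matrix is $2\beta L(p)$, which is $\beta$ times the diffusion matrix $2L(p)$ of the canonical SDE \eqref{GSDE}. Next I would split the drift of \eqref{GGSDE} as $\mathrm{div}_\omega(\theta\nabla_\omega\nabla_p\mathbb{V})+\beta\,\mathrm{div}_\omega\big(\theta\nabla_\omega\nabla_p\log\tfrac{\mathbb{\Pi}^{1/2}}{\theta}\big)=-L(p)\nabla_p\mathbb{V}+\beta\,b_{\mathrm{can}}(p)$, where $b_{\mathrm{can}}$ is the drift of \eqref{GSDE}. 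Since the forward Kolmogorov operator $\mathbb{P}\mapsto-\nabla_p\cdot(b\mathbb{P})+\tfrac12\sum_{i,k}\partial_{p_i}\partial_{p_k}(a_{ik}\mathbb{P})$ is linear in the pair $(b,a)$, the part $(-L(p)\nabla_p\mathbb{V},\,0)$ contributes $\nabla_p\cdot(\mathbb{P}\,L(p)\nabla_p\mathbb{V})$ and the part $(\beta b_{\mathrm{can}},\,2\beta L(p))$ contributes $\beta\,\mathsf{L}^*_W\mathbb{P}$, with $\mathsf{L}^*_W$ the operator of Proposition~\ref{GKO} (its detailed derivation being the appendix computation). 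Adding these gives \eqref{FPE}.

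For the stationary density I would recast \eqref{FPE} in self-adjoint form. Expanding $\mathsf{L}^*_W\mathbb{P}=\tfrac12\nabla_p\cdot(\mathbb{P}\,L(p)\nabla_p\log\mathbb{\Pi})+\nabla_p\cdot(L(p)\nabla_p\mathbb{P})$ from Proposition~\ref{GKO} and collecting terms, \eqref{FPE} becomes
\begin{equation*}
\frac{\partial}{\partial t}\mathbb{P}=\beta\,\nabla_p\cdot\!\Big(L(p)\,\mathbb{P}^*(p)\,\nabla_p\tfrac{\mathbb{P}}{\mathbb{P}^*}\Big),
\end{equation*}
where I use the identity $\nabla_p\log\mathbb{P}^*=-\tfrac1\beta\nabla_p\mathbb{V}-\tfrac12\nabla_p\log\mathbb{\Pi}$, immediate from $\mathbb{P}^*=\tfrac1Z e^{-\mathbb{V}/\beta}\mathbb{\Pi}^{-1/2}$. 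Then $\mathbb{P}\equiv\mathbb{P}^*$ makes $\mathbb{P}/\mathbb{P}^*$ the constant $1$, so $\nabla_p(\mathbb{P}/\mathbb{P}^*)=0$ and the right-hand side vanishes; the hypothesis $Z<\infty$ ensures $\int_{\mathcal{P}(I)}\mathbb{P}^*\,dp=1$. As a conceptual check, since the backward operators of Proposition~\ref{GKO} satisfy $\mathsf{L}_W=\Delta_W$, equation \eqref{GGSDE} is an overdamped Langevin dynamics on $(\mathcal{P}(I),g^W)$ with drift $-\textrm{grad}_W\mathbb{V}$ and noise normalized so that the diffusion generator is $\beta\Delta_W$, and the Gibbs measure $\propto e^{-\mathbb{V}/\beta}\,d\textrm{vol}_W$, i.e.\ density $\frac1Z e^{-\mathbb{V}/\beta}\mathbb{\Pi}^{-1/2}$ against $dp$, is its invariant measure.

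The routine parts are the unfolding of the operators and the covariance computation; the one step requiring real care is the rearrangement of \eqref{FPE} into the self-adjoint form, where the $\mathbb{\Pi}^{1/2}$ weights and the signs of the logarithmic derivatives must combine exactly into $\mathbb{P}^*\nabla_p(\mathbb{P}/\mathbb{P}^*)$. A genuinely subtle point, which I would flag in a remark rather than fully resolve, is the boundary behaviour: stationarity of $\mathbb{P}^*$ on the \emph{open} simplex relies on $L(p)$ degenerating as $p\to\partial\mathcal{P}(I)$ (since $\theta_{ij}(p)\to0$ there), so that the probability flux through the boundary vanishes; a rigorous statement would require quantifying this degeneracy against the growth of $\mathbb{\Pi}(p)^{-1/2}$ and $e^{-\mathbb{V}(p)/\beta}$ near the boundary.
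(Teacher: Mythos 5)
Your proposal is correct, and all three steps check out: the quadratic covariation computation indeed gives $d\langle p_i,p_k\rangle_t=2\beta L(p_t)_{ik}\,dt$ (for $i\neq k$ the only surviving cross term is $-2\beta\sqrt{\omega_{ik}\theta_{ik}}\sqrt{\omega_{ki}\theta_{ki}}=2\beta L(p)_{ik}$, and the diagonal works out likewise), and your self-adjoint form $\beta\nabla_p\cdot\big(L(p)\mathbb{P}^*\nabla_p(\mathbb{P}/\mathbb{P}^*)\big)$ is identical to the paper's $\beta\nabla_p\cdot\big(\mathbb{P}\,L(p)\nabla_p\log\frac{\mathbb{P}}{e^{-\mathbb{V}/\beta}\mathbb{\Pi}^{-1/2}}\big)$ via $\mathbb{P}\nabla_p\log(\mathbb{P}/\mathbb{P}^*)=\mathbb{P}^*\nabla_p(\mathbb{P}/\mathbb{P}^*)$. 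Where you differ from the paper is the middle step. The paper works intrinsically: it asserts that the Kolmogorov forward equation of \eqref{GGSDE} for the density $\mathbb{p}$ with respect to $d\mathrm{vol}_W$ is the Riemannian Fokker--Planck equation $\partial_t\mathbb{p}=\textrm{div}_W(\mathbb{p}\,\textrm{grad}_W\mathbb{V})+\beta\Delta_W\mathbb{p}$, and then changes the reference measure via $\mathbb{P}=\mathbb{p}\,\mathbb{\Pi}^{-1/2}$ to land on \eqref{FPE}. You instead work extrinsically in $\mathbb{R}^n$: you read off the It\^o diffusion matrix $2\beta L(p)$ directly, split the drift into the $\mathbb{V}$-part and $\beta$ times the canonical drift of \eqref{GSDE}, and invoke linearity of the forward Kolmogorov operator in the drift--diffusion pair together with Proposition \ref{GKO} to identify the second summand with $\beta\mathsf{L}^*_W$. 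Your route is more self-contained at the level of It\^o calculus and makes explicit where the factor $\beta$ enters the diffusion matrix, at the cost of leaning on Proposition \ref{GKO} for the identification of the canonical part; the paper's route is shorter and makes the Riemannian Langevin structure (and hence the Gibbs form of the invariant measure) conceptually transparent, but buries the It\^o computation in the phrase ``standard'' and the appendix. Your closing caveat about the degeneracy of $L(p)$ at $\partial\mathcal{P}(I)$ mirrors the remark the paper itself makes after the proposition, so nothing is missing.
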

\begin{proof}
The Kolmogorov forward equation of SDE \eqref{GGSDE} satisfies  
\begin{equation*}
\begin{split}
\frac{\partial \mathbb{p}(t,p)}{\partial t}=&\textrm{div}_{W}(\mathbb{p}(t,p)\textrm{grad}_W\mathbb{V}(p))+\beta\Delta_{W}\mathbb{p}(t,p)\\
=&\mathbb{\Pi}(p)^{\frac{1}{2}}\nabla_p\cdot\Big(L(p)\big[\mathbb{p}(t,p)\nabla_p\mathbb{V}(p)+ \beta\nabla_p\mathbb{p}(t,p)\big] \mathbb{\Pi}(p)^{-
\frac{1}{2}}\Big).
\end{split}
\end{equation*}
Again, denote $\mathbb{P}(t,p)=\mathbb{p}(t,p)\mathbb{\Pi}(p)^{-\frac{1}{2}}$, then we have
\begin{equation*}
\begin{split}
\frac{\partial\mathbb{P}(t,p)}{\partial t}
=&\nabla_p\cdot(\mathbb{P}(t,p)L(p)\nabla_p\mathbb{V}(p))+\beta\mathsf{L}^*_W\mathbb{P}(t,p)\\
=&\beta\nabla_p\cdot(\mathbb{P}(t,p)L(p)\nabla_p\log\frac{\mathbb{P}(t,p)}{e^{-\frac{1}{\beta}\mathbb{V}(p)}\mathbb{\Pi}(p)^{-\frac{1}{2}}}).
\end{split}
\end{equation*}
This finishes the proof. 
\end{proof}
\begin{remark}
We note that the dynamical behaviors of SDEs \eqref{GSDE} or \eqref{GGSDE} are often complicated when $p_i$, $p_j$ are close to zero. They are degenerate SDEs on the boundary point of the simplex set. In modeling of finite state population games, we need to construct some reflecting boundary conditions to ensure the wellposedness of SDE \eqref{GGSDE}. We leave their studies in future works.  
\end{remark}
\section{Stochastic reversible Markov processes}\label{section3}
In this section, we present an important example of gradient drift diffusion process \eqref{GGSDE}. This is the main result of this paper. 
We first review the fact that gradient flows in $(\mathcal{P}(I), g^W)$ {characterize Kolmogorov forward equations} for finite state reversible Markov processes. In other words, there exists a $Q$-matrix, the generator of finite reversible Markov process, which is a gradient descent direction of {relative entropy} in $(\mathcal{P}(I), g^W)$. We next demonstrate that the proposed SDE adds {\em geometric diffusions} in transition equations of finite reversible Markov processes. In particular, we derive a Wasserstein $Q$-matrix function for modeling {both common noises and individual noises} towards finite state reversible Markov processes.  

In this section, we always consider an activation function: \begin{equation*}
\theta(x,y)=\frac{x-y}{\phi'(x)-\phi'(y)}, 
\end{equation*}
where $\phi\in C^{1}(\mathbb{R};\mathbb{R})$ is a convex function with $\phi(1)=0$. Let the functional $\mathbb{V}$ in equation \eqref{GGSDE} be the $\phi$-divergence:
\begin{equation*}
\mathbb{V}(p)=\mathrm{D}_\phi(p\|\pi):=\sum_{i=1}^n\phi(\frac{p_i}{\pi_i})\pi_i,
\end{equation*}
where $\pi\in\mathbb{R}^n$ is defined in \eqref{vol}. One example of $\phi$-divergence is the Kullback--Leibler (KL) divergence. E.g., when $\phi(x)=x\log x$, then $\mathrm{D}_\phi(p\|\pi)=\mathrm{D}_{\mathrm{KL}}(p\|\pi)=\sum_{i=1}^np_i\log\frac{p_i}{\pi_i}$. 
\subsection{Reversible Markov process}
We first review that gradient flows of $\phi$-divergences in $(\mathcal{P}(I), g^W)$ form reversible Markov processes; shown in \cite{EM1, M} and strong Onsager gradient flows \cite{ON}. In other words, let $\beta=0$. In this case, SDE \eqref{GGSDE} satisfies an ordinary differential equation, which is the gradient flow of $\phi$-divergence in $(\mathcal{P}(I), g^W)$: 
\begin{equation}\label{GD}
\begin{split}
\frac{dp_i(t)}{dt}=&\mathrm{div}_\omega (\theta(p(t))\nabla_\omega \nabla_p\mathrm{D}_{\phi}(p(t)\|\pi))_i\\
=&\sum_{j\in N(i)}\omega_{ij}\theta_{ij}(p(t))(\frac{\partial}{\partial p_j}-\frac{\partial}{\partial p_i})\mathrm{D}_{\phi}(p(t)\|\pi)\\
=&\sum_{j\in N(i)}\omega_{ij}\frac{\frac{p_j(t)}{\pi_j}-\frac{p_i(t)}{\pi_i}}{\phi'(\frac{p_j(t)}{\pi_j})-\phi'(\frac{p_i(t)}{\pi_i})}(\phi'(\frac{p_j(t)}{\pi_j})-\phi'(\frac{p_i(t)}{\pi_i}))\\
=&\sum_{j\in N(i)}\omega_{ij}(\frac{p_j(t)}{\pi_j}-\frac{p_i(t)}{\pi_i}),
\end{split}
\end{equation}
where we use the fact that $\theta_{ij}(p)=\theta(\frac{p_i}{\pi_i}, \frac{p_j}{\pi_j})=\frac{\frac{p_j}{\pi_j}-\frac{p_i}{\pi_i}}{\phi'(\frac{p_j}{\pi_j})-\phi'(\frac{p_i}{\pi_i})}$ and $\theta_{ij}(p)(\phi'(\frac{p_j}{\pi_j})-\phi'(\frac{p_i}{\pi_i}))=\frac{p_j}{\pi_j}-\frac{p_i}{\pi_i}$. 

In fact, gradient flow equation \eqref{GD} is a Kolmogorov forward equation for a time-continuous reversible Markov chain. We need to exchange notations in reversible Markov chains and finite weighted graphs $G=(I, E, \omega)$. In other words, denote 
\begin{equation}\label{Q}
Q_{ij}:=\begin{cases}
\frac{\omega_{ij}}{\pi_i} & \textrm{if $j\neq i$;}\\
-\sum_{k\in N(i)}\frac{\omega_{ik}}{\pi_i} & \textrm{if $j=i$}.
\end{cases}
\end{equation}
With this notation equation \eqref{GD} satisfies 
\begin{equation*}
\frac{dp_i(t)}{dt}=\sum_{j=1}^n [Q_{ji}p_j(t)-Q_{ij}p_i(t)]. 
\end{equation*}
The $Q$-matrix is the generator of a reversible Markov chain in $I$. It satisfies the row sum zero condition:
\begin{equation*}
\sum_{j=1}^n Q_{ij} = 0, \qquad Q_{ij}\geq 0, \quad \textrm{for $j\neq i$}.
\end{equation*}
And $\pi=(\pi_i)_{i=1}^n\in \mathbb{R}^n$ defined in \eqref{vol} is an invariant measure for ODE \eqref{GD} with the detailed balance relation
\begin{equation*}
Q_{ij}\pi_i = Q_{ji} \pi_j.
\end{equation*} 
\subsection{Stochastic reversible Markov process}
We next demonstrate that the gradient drift diffusion process in $(\mathcal{P}(I), g^W)$ satisfies a stochastic reversible Markov process on finite states. 

Let $\beta>0$ be a positive scalar. Consider SDE \eqref{GGSDE} as the gradient drift-diffusion flow of $\phi$-divergence in $(\mathcal{P}(I), g^W)$:
\begin{equation}\label{SGD}
\begin{split}
dp_i(t)=&\qquad\sum_{j\in N(i)}\omega_{ij}\theta_{ij}(p(t))(\frac{\partial}{\partial p_j}-\frac{\partial}{\partial p_i})\mathrm{D}_{\phi}(p(t)\|\pi)dt\\
&+\beta\sum_{j\in N(i)}\omega_{ij}\theta_{ij}(p(t))(\frac{\partial}{\partial p_j}-\frac{\partial}{\partial p_i})\log\frac{\mathbb{\Pi}(p(t))^{\frac{1}{2}}}{\theta_{ij}(p(t))}dt\\
&+\sqrt{\beta}\sum_{j\in N(i)}\sqrt{\omega_{ij}\theta_{ij}(p(t))} (dB_{ij}(t)-dB_{ji}(t)).
\end{split}
\end{equation}
{We next study several properties of SDE \eqref{SGD} on a simplex set. We rewrite SDE \eqref{SGD} into the format of Kolmogorov forward equation with a Wasserstein common noise perturbation. 
From equation \eqref{GD} and the definition of $Q$-matrix in \eqref{Q}, SDE \eqref{SGD} can be written as follows: }
\begin{equation*}
\begin{split}
dp_i(t)=&\qquad\sum_{j=1}^n [Q_{ji}p_j(t)-Q_{ij}p_i(t)]dt\\
&+\beta\sum_{j\in N(i)}\omega_{ij}\theta_{ij}(p(t))(\frac{\partial}{\partial p_j}-\frac{\partial}{\partial p_i})\log\frac{\mathbb{\Pi}(p(t))^{\frac{1}{2}}}{\theta_{ij}(p(t))}dt\\
&+\sqrt{\beta}\sum_{j\in N(i)}\sqrt{\omega_{ij}\theta_{ij}(p(t))} (dB_{ij}(t)-dB_{ji}(t)).
\end{split}
\end{equation*}
{Recall that the $Q$-matrix represents the classical probability transition rate between nodes in $I$ for a Markov process.  Following the above reformulation, we can define a Wasserstein diffusion perturbed $Q$-matrix. It represents a transition-rate matrix between nodes in $I$, adding with a probability density dependent coefficient Brownian motion. In particular, the diffusion coefficient comes from the metric $g^W$. } 

\begin{definition}[Wasserstein $Q$-matrix]
Assume that $p_i>0$ for all $i\in I$. Define a matrix function $Q^W=(Q^W_{ij})_{1\leq i,j\leq N}\in \mathbb{R}^{n\times n}$, where $Q^W_{ij}\colon\mathbb{R}^n\times \mathbb{R}\times \mathbb{R}^{n\times n}\rightarrow\mathbb{R}$, such that 
\begin{equation*}
Q^W_{ij}(p, \beta, \dot B):=\begin{cases}
Q_{ij}+a_{ij}(p) & \textrm{if $j\neq i$;}\\
-\sum_{k\in N(i)}(Q_{ik}+a_{ik}(p))& \textrm{if $j=i$},
\end{cases}
\end{equation*}
where 
\begin{equation*}
a_{ij}(p):=\frac{1}{p_i}\max\{0, A_{ji}(p)\},
\end{equation*}
and 
\begin{equation*}
A_{ij}(p):=\beta\omega_{ij}\theta_{ij}(p)(\frac{\partial}{\partial p_j}-\frac{\partial}{\partial p_i})\log\frac{\mathbb{\Pi}(p)^{\frac{1}{2}}}{\theta_{ij}(p)}+\sqrt{\beta\omega_{ij}\theta_{ij}(p)} (\dot B_{ij}(t)-\dot B_{ji}(t)).
\end{equation*}
From now on, we call $Q^{W}$ the {\em Wasserstein $Q$-matrix}. 
\end{definition}
Using the matrix function $Q^W$, we rewrite SDE \eqref{SGD} as follows. 
\begin{proposition}
SDE \eqref{SGD} satisfies 
\begin{equation*}
\dot p_i(t)=\sum_{j=1}^n [Q^W_{ji}(p(t), \beta, \dot B)p_j-Q^W_{ij}(p(t), \beta, \dot B)p_i]. 
\end{equation*}
In addition, $Q^W$ satisfies the row sum zero condition:
\begin{equation*}
\sum_{j=1}^n Q^W_{ij}(p,\beta, \dot B) = 0, \qquad Q^W_{ij}(p, \beta, \dot B)\geq 0, \quad \textrm{for $j\neq i$}.
\end{equation*}
If $\beta=0$, then the Wasserstein $Q$-matrix recovers the $Q$-matrix. I.e., 
\begin{equation*}
Q^W(p,0,\dot B)=Q. 
\end{equation*}
\end{proposition}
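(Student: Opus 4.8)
## Proof Proposal

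The plan is to verify the three assertions directly from the definitions, starting from SDE \eqref{SGD} written in the ``$Q$-matrix plus perturbation'' form displayed just before the Wasserstein $Q$-matrix definition. First I would recall that the drift term $\sum_{j=1}^n [Q_{ji}p_j(t)-Q_{ij}p_i(t)]\,dt$ accounts exactly for the $\phi$-divergence gradient-flow part via equation \eqref{GD} and the definition \eqref{Q} of $Q$. What remains in the $i$-th component of \eqref{SGD} is precisely $\sum_{j\in N(i)} A_{ij}(p(t))\,dt$ in the informal notation $\dot B$, where $A_{ij}(p)$ collects the $\beta$-drift correction coming from $\log\frac{\mathbb{\Pi}(p)^{1/2}}{\theta_{ij}(p)}$ together with the Brownian increment $\sqrt{\beta\omega_{ij}\theta_{ij}(p)}(\dot B_{ij}-\dot B_{ji})$. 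Here I would emphasize that $A_{ij}$ is anti-symmetric in $(i,j)$: the finite-difference operator $(\tfrac{\partial}{\partial p_j}-\tfrac{\partial}{\partial p_i})$ changes sign under $i\leftrightarrow j$, $\omega_{ij}$ and $\theta_{ij}(p)$ are symmetric, and $\dot B_{ij}-\dot B_{ji}$ is anti-symmetric by construction of $B^E_t$; hence $A_{ij}(p)=-A_{ji}(p)$.

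Next I would establish the reformulation $\dot p_i = \sum_{j=1}^n[Q^W_{ji}p_j - Q^W_{ij}p_i]$. Expanding the right side using $Q^W_{ij}=Q_{ij}+a_{ij}(p)$ for $j\neq i$ and the row-sum-zero definition of the diagonal, the $Q$-part reproduces the Markov drift, and the $a$-part contributes $\sum_{j\neq i}[a_{ji}(p)p_j - a_{ij}(p)p_i]$. Using $a_{ij}(p)=\frac{1}{p_i}\max\{0,A_{ji}(p)\}$, this equals $\sum_{j\neq i}[\max\{0,A_{ij}(p)\} - \max\{0,A_{ji}(p)\}]$, and since $A_{ji}=-A_{ij}$, the bracket is $\max\{0,A_{ij}\}-\max\{0,-A_{ij}\} = A_{ij}(p)$ for every $j$. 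Summing over $j\in N(i)$ recovers exactly the perturbation term in \eqref{SGD}, which gives the claimed identity. The row-sum-zero condition $\sum_{j=1}^n Q^W_{ij}=0$ is then immediate from the definition of the diagonal entry, and non-negativity of off-diagonal entries $Q^W_{ij}=Q_{ij}+a_{ij}(p)\geq 0$ follows because $Q_{ij}=\omega_{ij}/\pi_i\geq 0$ for $j\neq i$ and $a_{ij}(p)=\frac{1}{p_i}\max\{0,A_{ji}(p)\}\geq 0$ by construction (using $p_i>0$, which is where the hypothesis $p_i>0$ for all $i\in I$ enters). Finally, the case $\beta=0$ forces $A_{ij}(p)\equiv 0$, hence $a_{ij}(p)\equiv 0$, so $Q^W(p,0,\dot B)=Q$.

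The only genuinely delicate point is the identity $\max\{0,A_{ij}\}-\max\{0,-A_{ij}\}=A_{ij}$, which is the elementary decomposition of a real number into its positive and negative parts, together with the bookkeeping that makes the ``split'' of each edge term $A_{ij}$ into a forward rate $a_{ij}$ and a backward rate $a_{ji}$ self-consistent. I would remark that the particular choice of $a_{ij}$ is not unique — any anti-symmetric redistribution reproducing $A_{ij}$ on each edge works — but this choice guarantees non-negativity of all off-diagonal entries, which is what makes $Q^W$ a bona fide (random, time-dependent) transition-rate matrix. I do not expect to need any stochastic-analysis input here, since the statement is a pathwise algebraic rewriting valid for each realization of $\dot B$; issues of well-posedness and degeneracy at the simplex boundary are explicitly deferred, consistent with the remark following Proposition \ref{prop2}.
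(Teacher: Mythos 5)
Your proposal is correct and follows essentially the same route as the paper's own proof: it expands $Q^W_{ij}=Q_{ij}+a_{ij}(p)$, uses the anti-symmetry $A_{ij}(p)=-A_{ji}(p)$ together with the positive/negative part identity $\max\{0,A_{ij}\}-\max\{0,A_{ji}\}=A_{ij}$ to recover the perturbation term edgewise, and notes that the row-sum-zero, non-negativity, and $\beta=0$ claims are immediate from the definitions. The additional observations you make (where the hypothesis $p_i>0$ enters, and the non-uniqueness of the splitting into forward and backward rates) are accurate and go slightly beyond what the paper writes, but do not change the argument.
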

\begin{proof}
We check that 
\begin{equation*}
\begin{split}
Q^W_{ji}(p)p_j-Q^W_{ij}(p)p_i=&Q_{ji}p_j-Q_{ij}p_i+\frac{1}{p_j}\max\{0, A_{ij}(p)\}p_j-\frac{1}{p_i}\max\{0, A_{ji}(p)\}p_i\\
=&Q_{ji}p_j-Q_{ij}p_i+\max\{0, A_{ij}(p)\}-\max\{0, A_{ji}(p)\}. 
\end{split}
\end{equation*}
From the fact that $A_{ij}(p)=-A_{ji}(p)$, we have 
\begin{equation*}
A_{ij}(p)=\max\{0, A_{ij}(p)\}-\max\{0, -A_{ij}(p)\}=\max\{0, A_{ij}(p)\}-\max\{0, A_{ji}(p)\}. 
\end{equation*}
This finishes the proof. 
\end{proof}
{From above proposition, we note that $Q_{ji}^W$ represents the stochastic transition rate jumping from node $j$ to node $i$. The stochastic perturbation comes from the Wasserstein common noise.}

We last demonstrate the Fokker-Planck equations for SDE \eqref{SGD}. We also present an invariant distribution of SDE \eqref{SGD}.  
\begin{proposition}[Functional Fokker-Planck equations in finite state Wasserstein space]
Denote $\mathbb{P}(t, p)$ as the solution of the probability density function of SDE \eqref{SGD}. 
Then 
\begin{equation}\label{QFPE}
\begin{split}
\frac{\partial}{\partial t}\mathbb{P}(t,p)+\nabla_p\cdot(\mathbb{P}(t,p) (\sum_{j=1}^n [Q_{ji}p_j-Q_{ij}p_i] )_{i=1}^n)=\beta\mathsf{L}^*_W\mathbb{P}(t,p).
\end{split}
\end{equation}
Assume that $Z=\int_{\mathcal{P}(I)}e^{-\frac{1}{\beta}\mathrm{D}_\phi(p\|\pi)}\mathbb{\Pi}(p)^{-\frac{1}{2}}dp<+\infty$, then the stationary solution of equation \eqref{QFPE} satisfies 
\begin{equation*}
\mathbb{P}^*(p)=\frac{1}{Z}e^{-\frac{1}{\beta}\mathrm{D}_\phi(p\|\pi)}\mathbb{\Pi}(p)^{-\frac{1}{2}}.
\end{equation*}
\end{proposition}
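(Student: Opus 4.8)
The plan is to recognize SDE \eqref{SGD} as the special instance of the general gradient drift diffusion \eqref{GGSDE} obtained by choosing the activation function $\theta(x,y)=\frac{x-y}{\phi'(x)-\phi'(y)}$ together with the potential $\mathbb{V}(p)=\mathrm{D}_\phi(p\|\pi)$, and then to invoke Proposition \ref{prop2} essentially verbatim. First I would check that with these choices the componentwise form of \eqref{GGSDE} — its drift $\sum_{j\in N(i)}\omega_{ij}\theta_{ij}(p)(\frac{\partial}{\partial p_j}-\frac{\partial}{\partial p_i})[\mathbb{V}(p)+\beta\log\frac{\mathbb{\Pi}(p)^{1/2}}{\theta_{ij}(p)}]$ and its noise $\sqrt\beta\sum_{j\in N(i)}\sqrt{\omega_{ij}\theta_{ij}(p)}(dB_{ij}-dB_{ji})$ — coincides term by term with \eqref{SGD}. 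Consequently the Fokker–Planck equation supplied by Proposition \ref{prop2}, namely \eqref{FPE} with $\mathbb{V}=\mathrm{D}_\phi(p\|\pi)$, is the governing equation for the density $\mathbb{P}(t,p)$ of \eqref{SGD}.

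Next I would rewrite the first-order term $\nabla_p\cdot(\mathbb{P}(t,p)L(p)\nabla_p\mathrm{D}_\phi(p\|\pi))$ of \eqref{FPE} in terms of the classical $Q$-matrix. Using $L(p)=-\mathrm{div}_\omega(\theta\nabla_\omega)$ one has $L(p)\nabla_p\mathrm{D}_\phi(p\|\pi)=-\mathrm{div}_\omega(\theta(p)\nabla_\omega\nabla_p\mathrm{D}_\phi(p\|\pi))$, and the chain of identities in \eqref{GD} — which exploits $\theta_{ij}(p)\big(\phi'(\tfrac{p_j}{\pi_j})-\phi'(\tfrac{p_i}{\pi_i})\big)=\tfrac{p_j}{\pi_j}-\tfrac{p_i}{\pi_i}$ and the definition \eqref{Q} of $Q$ — shows $\mathrm{div}_\omega(\theta(p)\nabla_\omega\nabla_p\mathrm{D}_\phi(p\|\pi))_i=\sum_{j=1}^n[Q_{ji}p_j-Q_{ij}p_i]$. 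Hence $L(p)\nabla_p\mathrm{D}_\phi(p\|\pi)=-\big(\sum_{j=1}^n[Q_{ji}p_j-Q_{ij}p_i]\big)_{i=1}^n$, and substituting this into \eqref{FPE} and moving the drift term to the left-hand side yields exactly \eqref{QFPE}.

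For the stationary solution I would simply specialize the stationary formula of Proposition \ref{prop2}: under $\mathbb{V}=\mathrm{D}_\phi(p\|\pi)$ and the stated integrability hypothesis $Z=\int_{\mathcal{P}(I)}e^{-\frac{1}{\beta}\mathrm{D}_\phi(p\|\pi)}\mathbb{\Pi}(p)^{-\frac12}dp<+\infty$, the normalized density $\mathbb{P}^*(p)=\frac{1}{Z}e^{-\frac{1}{\beta}\mathrm{D}_\phi(p\|\pi)}\mathbb{\Pi}(p)^{-\frac12}$ makes the logarithm in $\beta\nabla_p\cdot\big(\mathbb{P}L(p)\nabla_p\log\frac{\mathbb{P}}{e^{-\mathrm{D}_\phi/\beta}\mathbb{\Pi}^{-1/2}}\big)$ vanish, so the right-hand side of \eqref{QFPE} annihilates it together with the drift term (recall the drift is the $L(p)\nabla_p\mathrm{D}_\phi$ contribution in \eqref{FPE}), and $\mathbb{P}^*$ is stationary.

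The argument is mostly bookkeeping; the single place demanding care is the sign and index convention when identifying $L(p)\nabla_p\mathrm{D}_\phi(p\|\pi)$ with the $Q$-matrix drift — in particular keeping the anti-symmetrization $(i,j)\leftrightarrow(j,i)$ consistent so that the transposed action $Q_{ji}p_j-Q_{ij}p_i$ (rather than $Q_{ij}p_j-Q_{ji}p_i$) appears, and confirming that this drift is genuinely in $\nabla_p$-divergence form as written in \eqref{QFPE}. Since these are precisely the manipulations already performed in \eqref{GD} and in the proof of Proposition \ref{prop2}, no essentially new difficulty arises, and this Proposition is really a corollary of those results.
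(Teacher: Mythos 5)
Your proposal is correct and follows essentially the same route as the paper: the paper's proof likewise reads off the Fokker--Planck equation from Proposition \ref{prop2} with $\mathbb{V}=\mathrm{D}_\phi(p\|\pi)$, identifies $\nabla_p\cdot(\mathbb{P}L(p)\nabla_p\mathrm{D}_\phi(p\|\pi))$ with $-\nabla_p\cdot\bigl(\mathbb{P}\,(\sum_{j}[Q_{ji}p_j-Q_{ij}p_i])_{i}\bigr)$ via the computation in \eqref{GD}, and then collapses the right-hand side into the single log-gradient divergence $\beta\nabla_p\cdot\bigl(\mathbb{P}L(p)\nabla_p\log\frac{\mathbb{P}}{e^{-\mathrm{D}_\phi/\beta}\mathbb{\Pi}^{-1/2}}\bigr)$, from which the Gibbs stationary density is immediate. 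Your sign bookkeeping for $L(p)\nabla_p\mathrm{D}_\phi(p\|\pi)=-\bigl(\sum_{j}[Q_{ji}p_j-Q_{ij}p_i]\bigr)_{i=1}^n$ matches the paper's convention $L(p)=-\mathrm{div}_\omega(\theta\nabla_\omega)$, so no gap remains.
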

\begin{proof}
The proof directly follows from Proposition \ref{prop2}. We have
\begin{equation*}
\begin{split}
\frac{\partial\mathbb{P}(t,p)}{\partial t}=&-\nabla_p\cdot(\mathbb{P}(t,p) (\sum_{j=1}^n [Q_{ji}p_j-Q_{ij}p_i] )_{i=1}^n)+\beta\mathsf{L}^*_W\mathbb{P}(t,p)\\
=&\nabla_p\cdot(\mathbb{P}(t,p)L(p)\nabla_p\mathrm{D}_\phi(p\|\pi))+\beta {L}^*_W\mathbb{P}(t,p)\\
=&\nabla_p\cdot(\mathbb{P}(t,p)L(p)\nabla_p\mathrm{D}_\phi(p\|\pi))+\beta \nabla_p\cdot(\mathbb{P}(t,p)L(p)\nabla_p\log\frac{\mathbb{P}(t,p)}{\mathbb{\Pi}(p)^{-\frac{1}{2}}})\\
=&\beta\nabla_p\cdot(\mathbb{P}(t,p)L(p)\nabla_p\log\frac{\mathbb{P}(t,p)}{e^{-\frac{1}{\beta}\mathrm{D}_\phi(p\|\pi)}\mathbb{\Pi}(p)^{-\frac{1}{2}}}).
\end{split}
\end{equation*}
Clearly, the stationary density of equation \eqref{QFPE} satisfies 
\begin{equation*}
\mathbb{P}^*(p)=\frac{1}{Z}e^{-\frac{1}{\beta}\mathrm{D}_\phi(p\|\pi)}\mathbb{\Pi}(p)^{-\frac{1}{2}},
\end{equation*}
where $Z<+\infty$ is a normalization constant.
\end{proof}

{We remark that the finite state Wasserstein drift diffusion defines a class of diffusion processes on the simplex set. They add a particular class of probability dependent Brownian motions into transition kernels of Markov processes. These noises are built from gradient structures of Markov processes, and essentially form ``canonical'' noises in the probability manifold. The proposed stochastic process can be viewed as the finite state analog of super Brownian motion, studied in \cite{KLR, WD1, WD}. In the future work, we shall investigate physical modelings and applications of Wasserstein diffusion processes on graphs. We expect geometric calculations in the probability simplex play essential roles in constructing and understanding the proposed stochastic Markov processes.}

\section{Examples on a two point space}\label{section4}
In this section, we present several examples of Wasserstein gradient drift diffusion processes \eqref{SGD} on a two-point state. 

Consider a two-point graph $I=\{1,2\}$, with $\omega_{12}=\omega_{21}>0$, $\omega_{11}=\omega_{22}=0$, and $\pi_1=\pi_2=\frac{1}{2}$.  
Denote $p=(p_1, p_2)^{\ts}\in \mathcal{P}(I)\subset \mathbb{R}^2$ as the probability function. In this case,  
\begin{equation*}
L(p)=\begin{pmatrix}
\theta_{12}(p)\omega_{12} & -\theta_{12}(p)\omega_{12}\\
-\theta_{12}(p)\omega_{12} & \theta_{12}(p)\omega_{12}  
\end{pmatrix}.
\end{equation*}
The eigenvalue of $L(p)$ can be computed explicitly. In other words, 
\begin{equation*}
\mathbb{\Pi}(p)=\lambda_1(p)=2\omega_{12}\theta_{12}(p). 
\end{equation*}
The Wasserstein gradient drift-diffusion \eqref{GGSDE} satisfies 
 \begin{equation}\label{SDE_example}
\left\{\begin{aligned}
&dp_1(t) =\omega_{12}\theta_{12}(p(t))(\frac{\partial}{\partial p_2}-\frac{\partial}{\partial p_1})[\mathbb{V}(p)-\frac{\beta}{2}\log\theta_{12}(p(t))]dt\\
&\hspace{1.5cm}+\sqrt{\beta\omega_{12}\theta_{12}(p(t))} (dB_{12}(t)-dB_{21}(t)), \\
&dp_2(t)=\omega_{12}\theta_{12}(p(t))(\frac{\partial}{\partial p_1}-\frac{\partial}{\partial p_2})[\mathbb{V}(p)-\frac{\beta}{2}\log\theta_{12}(p(t))]dt\\
&\hspace{1.5cm}+\sqrt{\beta\omega_{12}\theta_{12}(p(t))} (dB_{21}(t)-dB_{12}(t)), 
\end{aligned}\right.
\end{equation}
where $(B_{12}, B_{21})\in\mathbb{R}^2$ are standard independent Brownian motions. 

The two dimensional SDE \eqref{GGSDE} can be further simplified into a one dimensional equation. Denote $x(t): =p_1(t)\in [0,1]$, $p_2(t)=1-x(t)$, $h=\sqrt{\omega_{12}}>0$, $V(x):=\mathbb{V}(p)=\mathbb{V}(x,1-x)$, and $\theta(x):=\theta_{12}(p)$. Note that $\frac{\partial}{\partial p_1}\theta_{12}(p)-\frac{\partial}{\partial p_2}\theta_{12}(p)=\frac{d}{dx} \theta(x)=\theta'(x)$, and $\frac{\partial}{\partial p_1}\mathbb{V}(p)-\frac{\partial}{\partial p_2}\mathbb{V}(p)=\frac{d}{dx}V(x)=V'(x)$. Write $B(t)=\frac{1}{\sqrt{2}}(B_{12}(t)-B_{21}(t))$. Then SDE \eqref{SDE_example} satisfies 
\begin{equation}\label{SDE2}
dx_t=-h^2[\theta(x_t)V'(x_t)-\frac{\beta}{2}\theta'(x_t)]dt+h\sqrt{2\beta\theta(x_t)}dB_t, 
\end{equation}
where $x_t\in [0,1]$ is the solution. Thus the Fokker-Planck equation of SDE \eqref{SDE2} satisfies 
\begin{equation*}
\begin{split}
\partial_t\rho(t,x)=&h^2\partial_x(\rho(t,x)[\theta(x)V'(x)-\frac{\beta}{2}\theta'(x)])+\beta h^2\partial_{xx}(\rho(t,x)\theta(x))\\
=&\beta h^2\partial_x\Big(\rho(t,x)\theta(x)\partial_x\log\frac{\rho(t,x)}{e^{-\frac{1}{\beta}V(x)}\theta(x)^{-\frac{1}{2}}}\Big).
\end{split}
\end{equation*}
And the stationary density of SDE \eqref{SDE2} satisfies 
\begin{equation*}
\rho^*(x)=\frac{1}{Z}e^{-\frac{V(x)}{\beta}}\theta(x)^{-\frac{1}{2}},
\end{equation*}
where we assume that $Z=\int_0^1e^{-\frac{V(y)}{\beta}}\theta(y)^{-\frac{1}{2}} dy<+\infty$.
\begin{example}[Wasserstein common noises on a two point space]\label{ex5}
Let $\beta=1$ and $\mathbb{V}(p)=0$. The SDE \eqref{SDE2} forms the canonical Wasserstein common noise:  
\begin{equation*}
dx_t =\frac{h^2}{2}\theta'(x_t)dt+h\sqrt{2\theta(x_t)}dB_t.
\end{equation*}
In this case, assume that $Z=\int_0^1\theta(y)^{-\frac{1}{2}} dy<+\infty$, the stationary density in simplex set satisfies 
\begin{equation*}
\rho^*(x)=\frac{1}{Z}\theta(x)^{-\frac{1}{2}}.
\end{equation*}
In particular, let $\theta$ be a geometric mean, i.e., $\theta(x)=2\sqrt{x(1-x)}$. Then SDE \eqref{SDE2} forms 
\begin{equation*}
dx_t=h^2\frac{1-2x_t}{x_t^{\frac{1}{2}}(1-x_t)^{\frac{1}{2}}}dt+ 2hx_t^{\frac{1}{4}}(1-x_t)^{\frac{1}{4}}dB_t.
\end{equation*}
We simulate the above SDE numerically in the time interval $[0,1]$ by the Euler--Maruyama scheme, for parameters $h=0.1$, $t\in [0,1]$, $x_0=0.5$.   
\begin{figure}[h]
 \subfloat[Trajectories of Wasserstein common noises $x_t$.]
{\includegraphics[width=0.5\textwidth]{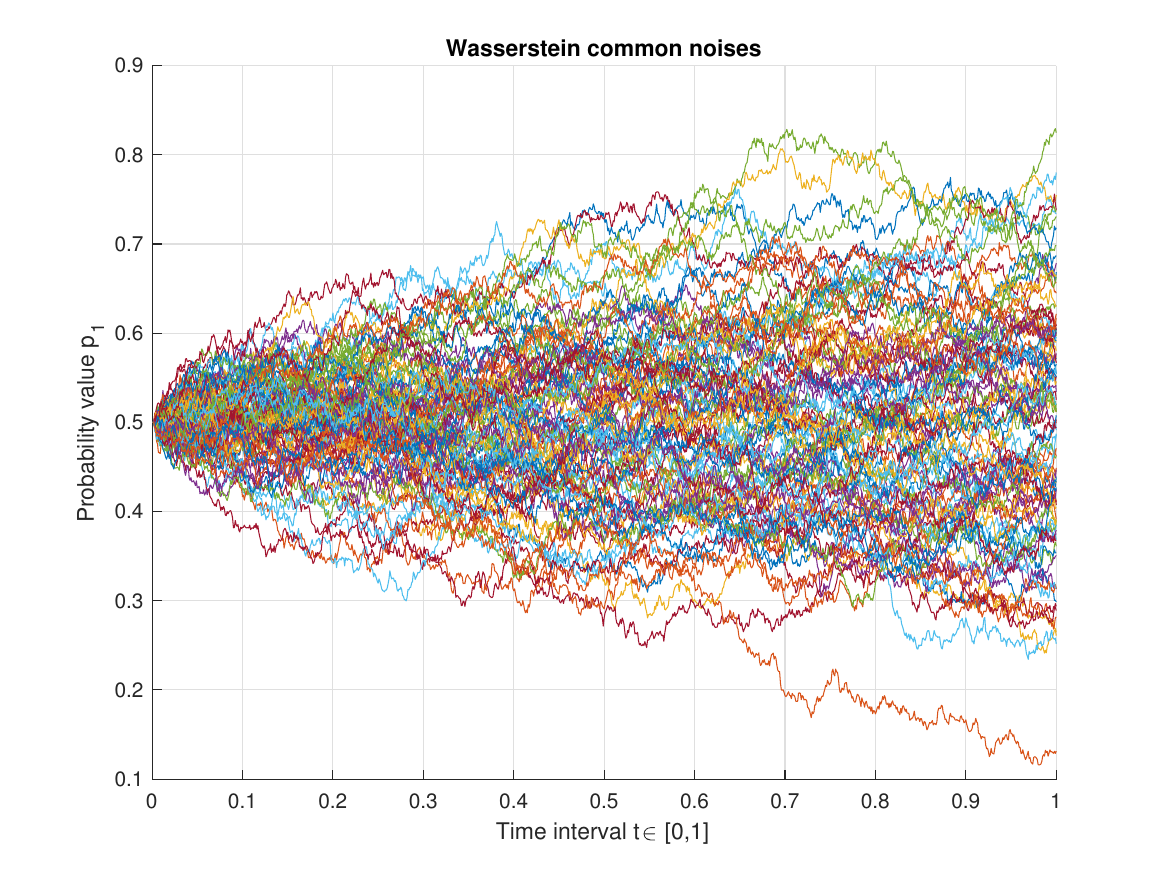}}
 \subfloat[Stationary density $\rho^*$.]
{\includegraphics[width=0.5\textwidth]{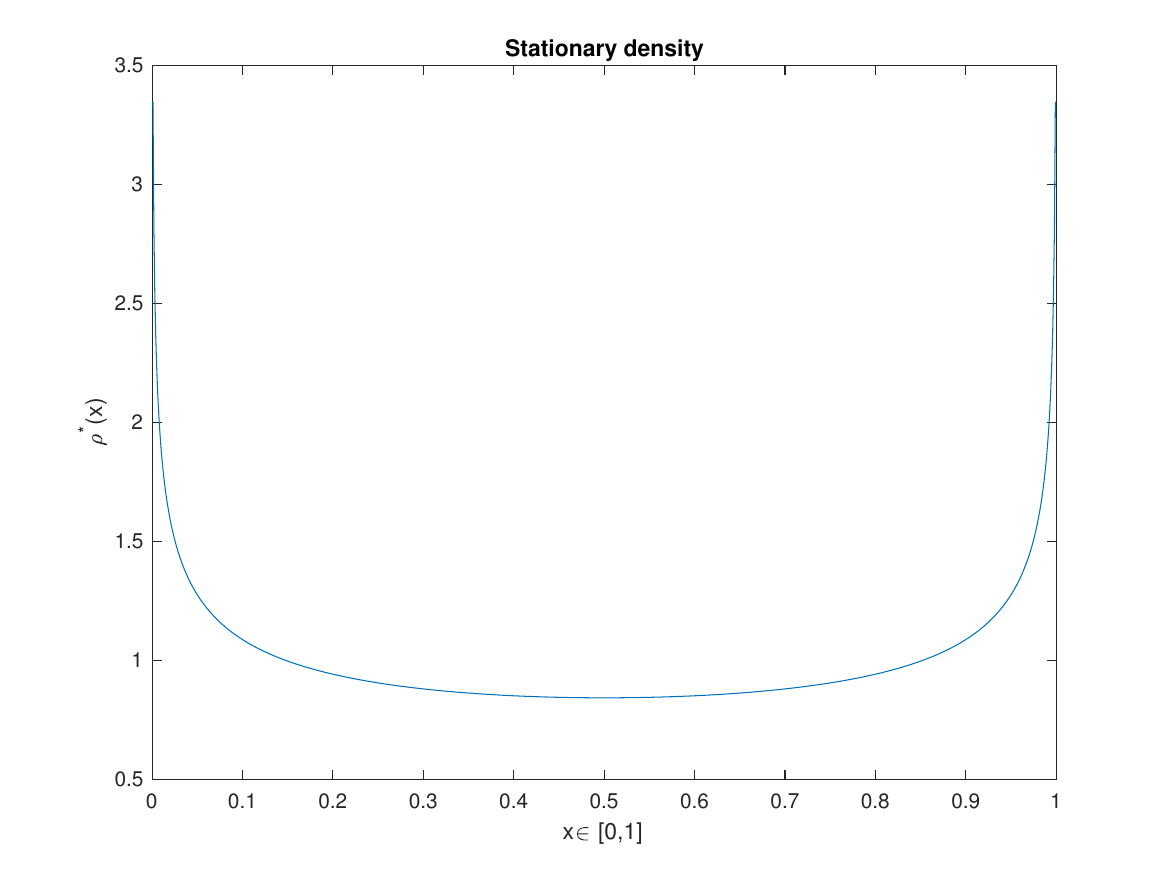}}
\caption{Simulations of Wasserstein common noises with the geometric mean function in Example \ref{ex5}.}
\end{figure} 

\end{example}

\begin{example}[Individual and Wasserstein common noises on a two point space]\label{ex6}
Let $\beta=1$ and $\mathbb{V}(p)=p_1\log\frac{p_1}{\pi_1}+p_2\log \frac{p_2}{\pi_2}$, i.e., $V(x)=x\log x+(1-x)\log(1-x)+\log 2$. 
Then SDE \eqref{SDE2} satisfies 
\begin{equation*}
dx_t =h^2[-\theta(x_t)(\log x_t-\log (1-x_t))+\frac{1}{2}\theta'(x_t)]dt+h\sqrt{2\theta(x_t)}dB_t.
\end{equation*}
 And the stationary density in simplex set satisfies 
\begin{equation*}
\rho^*(x)=\frac{1}{Z}(\frac{1}{x})^x(\frac{1}{1-x})^{1-x}\theta(x)^{-\frac{1}{2}}, \quad Z=\int_0^1(\frac{1}{y})^y(\frac{1}{1-y})^{1-y}\theta(y)^{-\frac{1}{2}} dy<+\infty.
\end{equation*}
In particular, let $\theta$ be a logarithm mean, i.e., $\theta(x)=\frac{2(2x-1)}{\log x-\log(1-x)}$. Then  SDE \eqref{SDE2} forms 
\begin{equation*}
\begin{split}
dx_t=&h^2[2(1-2x_t)+\frac{(1-2x_t)}{(x_t-x_t^2)(\log x_t-\log(1-x_t))^2}+\frac{2}{(\log x_t-\log(1-x_t))}]dt\\
&+ 2h\sqrt{\frac{2x_t-1}{\log x_t-\log(1-x_t)}}dB_t.
\end{split}
\end{equation*}
Again, we simulate the above SDE numerically in the time interval $[0,1]$ by the Euler--Maruyama scheme, for parameters $h=0.1$, $t\in [0,1]$, $x_0=0.5$.   
\begin{figure}[h]
 \subfloat[Trajectories of individual noises and Wasserstein common noises $x_t$.]
{\includegraphics[width=0.5\textwidth]{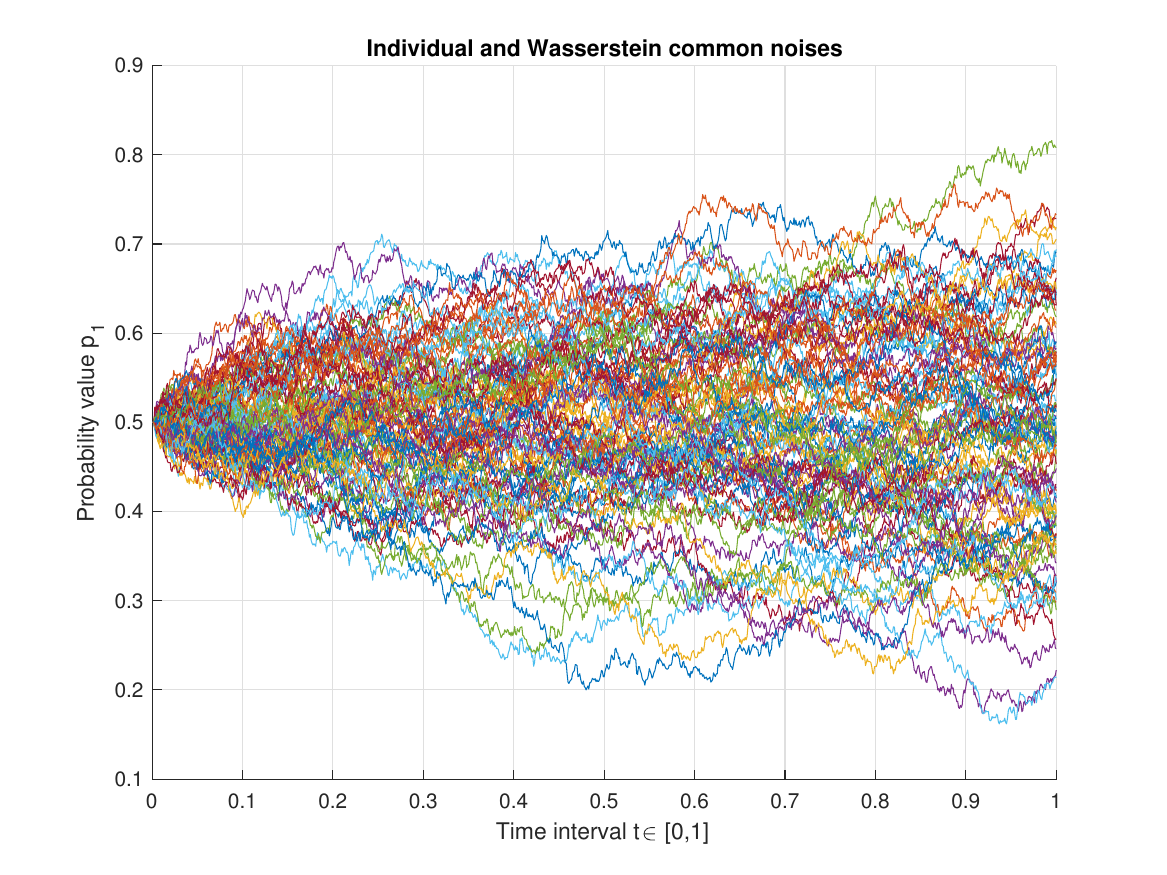}}
 \subfloat[Stationary density $\rho^*$.]
{\includegraphics[width=0.5\textwidth]{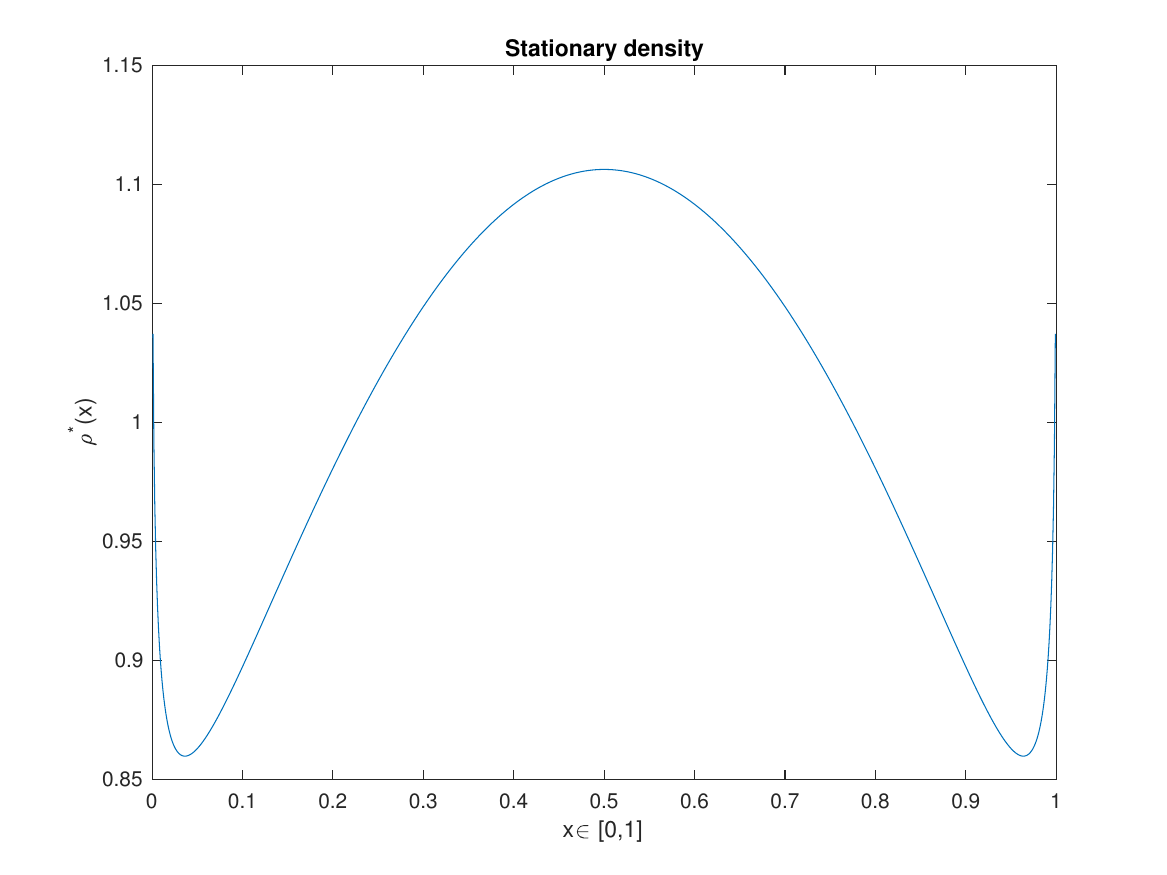}}
\caption{Simulations of individual and Wasserstein common noises with the logarithm mean function in Example \ref{ex6}.}
\end{figure} 

\end{example}

\section{Discussions}
In this paper, we present Wasserstein common noises in the probability simplex set. They are constructed from the Laplace-Beltrami operator in finite state Wasserstein space. We also derive a drift-diffusion process in the probability simplex. Extending equivalence relationships between gradient flows and reversible Markov processes, we introduce a class of stochastic reversible Markov processes. The stochastic perturbation is added from the canonical Wasserstein common noise on finite states. {In this procedure, one can define a class of stochastic Markov processes, where the Brownian motion perturbation is added into the transition kernel of Markov processes. These processes are discrete counterparts of super Brownian motions studied in mathematical physics communities \cite{KLR, WD1, WD}.} 

In particular, equation \eqref{GD} is known as the strong Onsager gradient flow \cite{GLL, ON}:
\begin{equation*}
\frac{dp}{dt}=-L(p)\nabla_p\mathrm{D}_\phi(p\|\pi)=\mathrm{div}_\omega(\theta(p_t) \nabla_\omega\nabla_p\mathrm{D}_\phi(p\|\pi)),
\end{equation*}
where $\nabla_p{D}_\phi(p\|\pi)$ is the generalized force and $L(p)$ is the Onsager response matrix. In this sense, the proposed SDE \eqref{SGD} satisfies {\em Onsager gradient drift diffusions}:
\begin{equation*}
\begin{split}
dp_t=&\mathrm{div}_\omega(\theta(p_t) \nabla_\omega\nabla_p\mathrm{D}_\phi(p_t\|\pi))dt+\beta\mathrm{div}_\omega(\theta(p_t) \nabla_\omega \nabla_p\log\frac{\mathbb{\Pi}(p_t)^{\frac{1}{2}}}{\theta(p_t)})dt+\sqrt{2\beta}\mathrm{div}_\omega(\sqrt{\theta(p_t)}dB^E_t),\\
& \hspace{1cm}\textrm{Individual noises} \hspace{4cm}\textrm{Wasserstein common noises}
\end{split}
\end{equation*}
where $\beta>0$ is a scalar for the canonical Wasserstein diffusion and $\mathbb{\Pi}(p_t)$ is the product of positive eigenvalues of the Onsager response matrix $L(p_t)$.  {We remark that the Onsager response matrix $L(p)$ also introduces a natural class of stochastic Markov processes, i.e., drift--diffusion processes in the probability manifold.}

In future work, we shall investigate properties of Wasserstein drift-diffusion processes on discrete states. In particular, the boundary set and corners of the probability simplex bring difficulties in the existence of strong solutions of SDEs \eqref{GGSDE}. One has to assume that the square root of activation function $\sqrt{\theta}$ is Lipschitz, which is often not satisfied for many divergence induced activation functions. The other interesting question is about the entropy dissipation analysis for probability density function supported on a simplex set; see \cite{LiG}. In applications, we remark that finite states Wasserstein drift-diffusion processes are essential in modeling and computations of population games in social dynamics. Typical examples include stochastic evolutionary dynamics \cite{FY}, mean field games \cite{cardaliaguet2019master, GLL}, and estimation problems in data sciences \cite{WWG}. More importantly, we shall develop fast and accurate algorithms to compute and model Wasserstein drift diffusion processes arised in social sciences, biology, evolutionary game theory, and Bayesian and AI sampling problems. {We expect that geometric calculations in probability manifolds are essential tools in these modeling, computation and analysis problems. }

\noindent\textbf{Acknowledgement:} On behalf of all authors, the corresponding author states that there is no conflict of interest.

\bibliographystyle{abbrv}

\section*{Appendix: Derivations of $L^*_W$ and $L_W$}
The derivation of Fokker-Planck equation for SDE \eqref{GGSDE} is standard. We omit it here. 
We only show the derivation from the Wasserstein Laplacian-Beltrami operator on simplex set to the Kolmogorov forward and backward operators in a simplex set.  
 \begin{proof} 
The Laplace--Beltrami operator in $(\mathcal{P}(I), g^W)$ satisfies
\begin{equation}\label{heat}
\begin{split}
\Delta_{W}\mathbb{p}(p)=&\mathbb{\Pi}(p)^{\frac{1}{2}}\nabla_p\cdot\Big(\mathbb{\Pi}(p)^{-\frac{1}{2}}L(p) \nabla_p\mathbb{p}(p)\Big),
\end{split}
\end{equation}
where $\mathbb{p}\in C^{\infty}(\mathcal{P}(I); \mathbb{R})$ is a probability density function on simplex set w.r.t. $\textrm{vol}_W$. Here 
\begin{equation*}
\int_{\mathcal{P}(I)}\mathbb{p}(p)d\textrm{vol}_W(p)=1.
\end{equation*}
Denote a probability density function of simplex set w.r.t. Lebesgue measure in $\mathbb{R}^n$ as
$$\mathbb{P}(p)=\mathbb{p}(p) \mathbb{\Pi}(p)^{-\frac{1}{2}}.$$ Then operator \eqref{heat} forms
\begin{equation*}
\begin{split}
L_{W}^*\mathbb{P}(p)=&\nabla_p\cdot\Big(\mathbb{\Pi}(p)^{-\frac{1}{2}}L(p) \nabla_p\mathbb{p}(p)\Big)\\
=&\nabla_p\cdot\Big(\mathbb{p}(p)\mathbb{\Pi}(p)^{-\frac{1}{2}}L(p) \nabla_p\log\mathbb{p}(p)\Big)\\
=&\nabla_p\cdot \Big(\mathbb{P}(p) L(p)\nabla_p\log \frac{\mathbb{P}(p)}{\mathbb{\Pi}(p)^{-\frac{1}{2}}}\Big)\\
=&\frac{1}{2}\nabla_p\cdot \Big(\mathbb{P}(p) L(p) \nabla_p\log\mathbb{\Pi}(p)\Big)+\nabla_p\cdot \Big(L(p) \nabla_p\mathbb{P}(p)\Big),
\end{split}
\end{equation*}
where we use the fact 
\begin{equation*}
\nabla_p \mathbb{p}(p)=\mathbb{p}(p)\nabla_p\log\mathbb{p}(p),\quad \nabla_p\mathbb{P}(p)=\mathbb{P}(p)\nabla_p\log\mathbb{P}(p). 
\end{equation*}
In details, we have
\begin{equation*}
\begin{split}
\nabla_p\cdot \Big(L(p) \nabla_p\mathbb{P}(p)\Big)=&\sum_{i=1}^n\sum_{j=1}^n\frac{\partial}{\partial p_j}\Big(L(p)_{ij}\frac{\partial}{\partial p_i}\mathbb{P}(p)\Big)\\
=&\sum_{i=1}^n\sum_{j=1}^n\Big(\frac{\partial}{\partial p_j}L(p)_{ij}\frac{\partial}{\partial p_i}\mathbb{P}(p)+L(p)_{ij}\frac{\partial^2}{\partial p_i\partial p_j}\mathbb{P}(p)\Big)\\
=&\sum_{i=1}^n\sum_{j\in N(i)}\omega_{ij}(\frac{\partial}{\partial p_i}-\frac{\partial}{\partial p_j})\theta_{ij}(p)\frac{\partial}{\partial p_i}\mathbb{P}(p)\\
&-\sum_{i=1}^n\sum_{j\in N(i)}\omega_{ij}\theta_{ij}(p)\frac{\partial^2}{\partial p_i\partial p_j}\mathbb{P}(p)+\sum_{i=1}^n\sum_{k\in N(i)}\omega_{ik}\theta_{ik}(p)\frac{\partial^2}{\partial p_i\partial p_i}\mathbb{P}(p)\\
=&\quad\frac{1}{2}\sum_{(i,j)\in E}\omega_{ij}(\frac{\partial}{\partial p_i}-\frac{\partial}{\partial p_j})\theta_{ij}(p)(\frac{\partial}{\partial p_i}-\frac{\partial}{\partial p_j})\mathbb{P}(p)\\
&+\frac{1}{2}\sum_{(i,j)\in E}\omega_{ij}\theta_{ij}(p)(\frac{\partial^2}{\partial p_j\partial p_j}+\frac{\partial^2}{\partial p_i\partial p_i}-2\frac{\partial^2}{\partial p_i\partial p_j})\mathbb{P}(p).
\end{split}
\end{equation*}
In above derivation, we use the fact that 
\begin{equation*}
\begin{split}
\sum_{j=1}^n\frac{\partial}{\partial p_j}L(p)_{ij}=&\sum_{j\neq i}\frac{\partial}{\partial p_j}L(p)_{ij}+\frac{\partial}{\partial p_i}L(p)_{ii}\\
=&-\sum_{j\in N(i)}\omega_{ij}\frac{\partial}{\partial p_j}\theta_{ij}(p)+\sum_{k\in N(i)}\omega_{ki}\frac{\partial}{\partial p_i}\theta_{ki}(p)\\
=&\sum_{j\in N(i)}\omega_{ij}(\frac{\partial}{\partial p_i}-\frac{\partial}{\partial p_j})\theta_{ij}(p). 
\end{split}
\end{equation*}
Similarly, we can derive 
$$\nabla_p\cdot \Big(\mathbb{P}(p) L(p) \nabla_p\log\mathbb{\Pi}(p)\Big)=(\nabla_p\mathbb{P}(p), L(p)\nabla_p\log\mathbb{\Pi}(p))+\mathbb{P}(p)\nabla_p\cdot \Big(L(p) \nabla_p\log\mathbb{\Pi}(p)\Big).$$ 
This finishes the proof. 

We next derive the Kolmogorov backward operator for SDE \eqref{GSDE}.  
\begin{equation*}
\int_{\mathcal{P}(I)}\mathbb{\Phi}(p)\mathsf{L}^*_W\mathbb{P}(p)dp=\int_{\mathcal{P}(I)}\mathbb{P}(p)\mathsf{L}_W\mathbb{\Phi}(p)dp.
\end{equation*}
Clearly, we have 
\begin{equation*}
\begin{split}
\mathsf{L}_W\mathbb{\Phi}(p)=&-\frac{1}{2}(\nabla_p\mathbb{\Phi}(p), L(p)\nabla_p\log\mathbb{\Pi}(p))+\nabla_p\cdot(L(p)\nabla_p\mathbb{\Phi}(p)).
\end{split}
\end{equation*}
This finishes the proof.
\end{proof}

\end{document}